\DeclarePairedDelimiter{\abs}{\lvert}{\rvert}
\theoremstyle{plain} 
\newtheorem{thm}{Theorem}[section]
\newtheorem{lemma}[thm]{Lemma}
\newtheorem{proposition}[thm]{Proposition}
\newtheorem{corollary}[thm]{Corollary}
\theoremstyle{definition}
\newtheorem{definition}[thm]{Definition}
\newtheorem{remark}[thm]{Remark}
\newtheorem{conjecture}[thm]{Conjecture}
\newcommand{\C}{\mathbb{C}}
\newcommand{\Z}{\mathbb{Z}}
\newcommand{\Q}{\mathbb{Q}}
\newcommand{\R}{\mathbb{R}}
\newcommand{\N}{\mathbb{N}}
\newcommand{\K}{\mathbb{K}}
\newcommand{\cC}{\mathcal{C}}
\newcommand{\cF}{\mathcal{F}}
\newcommand{\cH}{\mathcal{H}}
\newcommand{\cP}{\mathcal{P}}
\newcommand{\fg}{\mathfrak{g}}
\newcommand{\Alb}{\mathrm{Alb}}
\newcounter{tmp}
\begin{document}

\title{Zeros of one-forms and homologically trivial fibrations}

\author{Stefan Schreieder and Ruijie Yang}

\maketitle


 \begin{abstract}  
We show that a conjecture of Kotschick about one-forms without zeros on compact K\"ahler manifolds follows in the case of simple Albanese torus from a conjecture of Bobadilla and Koll\'ar about homologically trivial fibrations. 
As an application, we prove Kotschick's conjecture for compact K\"ahler manifolds $X$ with $b_1(X)\geq 2 \dim X-2$, whose Albanese torus is simple.
\end{abstract}


%

\section{Introduction}

In \cite{Kotschick}, Kotschick made the following
\begin{conjecture}\label{conjecture: Kotschick}
For a compact K\"ahler manifold $X$, the following conditions are equivalent:
\begin{itemize}
    \item[(A)] $X$ admits a holomorphic one-form without zeros;
    \item[(B)]$X$ admits a real closed one-form without zeros; or by Tischler's theorem \cite{Tischler} equivalently, the underlying differential manifold of $X$ is a $\cC^{\infty}$-fiber bundle over the circle.
\end{itemize}
\end{conjecture}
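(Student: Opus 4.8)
The implication (A) $\Rightarrow$ (B) is the easy one and I would dispose of it first: if $\omega \in H^0(X,\Omega^1_X)$ is nowhere zero, then its real part $\eta = \operatorname{Re}\omega$ is a real closed one-form, and I claim it is nowhere zero as well. Indeed, at a point $p$ the form $\omega_p \colon T_pX \to \C$ is $\C$-linear, so if $\eta_p(v) = \operatorname{Re}\omega_p(v)$ vanished for every real tangent vector $v$, then replacing $v$ by $iv$ and using $\omega_p(iv) = i\,\omega_p(v)$ would force $\operatorname{Im}\omega_p(v) = 0$ as well, whence $\omega_p = 0$, contradicting the hypothesis. Thus the entire content of the conjecture lies in the reverse implication (B) $\Rightarrow$ (A), and this is where I would concentrate.

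For (B) $\Rightarrow$ (A) the plan is to transport the problem to the Albanese torus $A = \Alb(X)$. Every holomorphic one-form on $X$ is the pullback $a^*\alpha$ of a translation-invariant form $\alpha$ under the Albanese map $a \colon X \to A$, and $a^*\alpha$ vanishes at $p$ precisely when the image of $da_p$ lies in the hyperplane $\ker\alpha \subset T_{a(p)}A$. Producing a nowhere-zero holomorphic one-form therefore amounts to finding a linear form $\alpha$ for which the Albanese map is everywhere transverse to $\ker\alpha$. Condition (B) supplies a smooth fibration $X \to S^1$, equivalently (after Tischler) an integral class $u \in H^1(X,\Z) \cong H^1(A,\Z)$ represented by a nowhere-zero real one-form; I would use $u$ to single out the relevant family of candidate forms $\alpha$, and I would invoke the simplicity of $A$ to guarantee that this family is not trapped inside any proper subtorus --- there are none --- so that no intermediate fibration can obstruct transversality.

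The crux is then to rule out the zero locus $Z(a^*\alpha)$, and this is exactly the point at which I expect to need the conjecture of Bobadilla and Koll\'ar. Integrating $\alpha$ presents $X$, locally or after passing to a suitable cover, as a family of leaves of the associated codimension-one holomorphic foliation, a family that degenerates precisely along $Z(a^*\alpha)$. The smooth fibration structure furnished by (B) says that, topologically, this family carries no vanishing cohomology: it is a \emph{homologically trivial} degeneration. The Bobadilla--Koll\'ar conjecture is designed to upgrade exactly such homological triviality to genuine local triviality of the family, which would force $Z(a^*\alpha) = \varnothing$ and hence establish (A). I would organize the argument so that simplicity of $A$ reduces the verification to a single degeneration of the expected type, to which the conjecture applies cleanly.

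The main obstacle, and the reason the full conjecture remains out of reach, is this last step: without Bobadilla--Koll\'ar one has no general mechanism converting the topological (homological) triviality coming from (B) into the analytic non-degeneracy needed for (A). For the unconditional application one should exploit the hypothesis $b_1(X) \geq 2\dim X - 2$, equivalently $q = \dim A \geq \dim X - 1$: here the Albanese map is generically finite or has one-dimensional generic fibres, so the zero locus $Z(a^*\alpha)$ has very small expected dimension and its structure can be pinned down directly. In this range I would verify the relevant instance of the homological-triviality conclusion by hand --- controlling the local degeneration by a dimension count together with the simplicity of $A$ --- thereby removing the dependence on the conjecture and proving Kotschick's conjecture outright for such $X$.
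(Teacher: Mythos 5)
Your easy direction (A) $\Rightarrow$ (B) is fine, and your high-level skeleton for (B) $\Rightarrow$ (A) --- pass to the Albanese map, use simplicity of the torus, invoke the Bobadilla--Koll\'ar conjecture, and handle $b_1(X)\geq 2\dim X-2$ via low relative dimension --- matches the paper in outline. But there is a genuine gap at the crux, in two respects. First, you apply Bobadilla--Koll\'ar to the wrong object: the ``family of leaves of the codimension-one holomorphic foliation'' attached to $a^{\ast}\alpha$ is not a proper morphism between smooth complex analytic spaces (leaves need not be compact, and the leaf space need not even be Hausdorff), so the conjecture does not apply to it. The paper applies the conjecture to the Albanese morphism $f:X\to \Alb(X)$ itself; once $f$ is smooth, every pullback of an invariant one-form is nowhere vanishing, with no transversality argument needed. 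Second, and more seriously, you treat the key hypothesis of Bobadilla--Koll\'ar --- that $f$ is a $\Z$-homology fiber bundle --- as if it followed directly from the $S^1$-fibration supplied by (B) (``the smooth fibration structure \ldots says that, topologically, this family carries no vanishing cohomology''). It does not: the fibers of $g:X\to S^1$ are real hypersurfaces with no direct relation to the fibers of $f$. This implication is precisely the paper's main technical content (Proposition \ref{prop: vanishing euler characteristic for direct image of Albanese} and Corollary \ref{cor: Albanese morphism is a Z homology fiber bundle}): one uses $g$ and Tischler's argument to produce rank-one local systems pulled back from $A$ with vanishing cohomology over an arbitrary algebraically closed field $\K$; then the Bhatt--Schnell--Scholze generic vanishing theorem for $\K$-perverse sheaves to degenerate the perverse Leray spectral sequence and get $\chi({}^{p}R^jf_{\ast}\K_X)=0$; then Kr\"amer--Weissauer's theorem that on a \emph{simple} torus a simple perverse sheaf with vanishing Euler characteristic is a shift of a local system (this, not an absence of subtori obstructing transversality, is where simplicity enters); and finally Liu--Maxim--Wang-type lemmas to pass from all algebraically closed fields to the integral statement. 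None of this machinery, or any substitute for it, appears in your proposal.

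On the unconditional case, your plan inverts the paper's logic: you propose to verify the homological-triviality statement ``by hand by a dimension count,'' whereas in the paper the homology-fiber-bundle property is established by the perverse-sheaf argument above irrespective of dimensions, and what the hypothesis $\dim \Alb(X)\geq \dim X-1$ buys is that the Bobadilla--Koll\'ar conjecture is actually a theorem in relative dimension $\leq 1$ (trivially in relative dimension $0$, and by Bobadilla--Koll\'ar's own Proposition 10 in relative dimension $1$), making the conditional argument unconditional there. Finally, note that the statement you were handed is a conjecture: the paper does not prove it in general, only conditionally on Bobadilla--Koll\'ar for simple Albanese torus and unconditionally in the range above, so any complete ``proof'' should be scoped accordingly, as yours implicitly is.
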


We propose the following stronger conjecture, which implies Conjecture \ref{conjecture: Kotschick} when we apply it to the Albanese morphism $X\to \Alb(X)$.
\begin{conjecture}\label{conjecture: Kotschick stronger}
Let $X$ be a compact K\"ahler manifold and let $f:X\to A$ be a morphism to a complex torus $A$. Then the following conditions are equivalent:
\begin{enumerate}[label=(\Alph*)]
    \item\label{condition A}$X$ admits a holomorphic one-form $w$ without zeros such that 
    \[ [w]\in f^{\ast}H^0(A,\Omega^1_A).\] 
     \item \label{condition B} $X$ admits a real closed one-form $\alpha$ without zeros such that $[\alpha]\in f^\ast H^1(A,\mathbb R)$; or, by Tischler's argument \cite{Tischler} equivalently, 
     the underlying differential manifold of $X$ is a $\cC^{\infty}$-fiber bundle $g:X\to S^1$ over the circle with 
     \[g^\ast H^1(S^1,\mathbb R)\subseteq f^\ast H^1(A,\mathbb R).\] 
\end{enumerate}
\end{conjecture}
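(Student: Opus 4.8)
The plan is to treat the two implications separately, since only one of them is elementary. The implication \ref{condition A} $\Rightarrow$ \ref{condition B} is routine, while \ref{condition B} $\Rightarrow$ \ref{condition A} is the genuine content of the conjecture and is where a deformation-theoretic input must enter; I expect the latter to be the main obstacle.

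For \ref{condition A} $\Rightarrow$ \ref{condition B}, I would simply take $\alpha := \mathrm{Re}(w)$. The key point is that the real part of a holomorphic one-form vanishes exactly on the zero locus of the form: in local holomorphic coordinates $z_j = x_j + i y_j$, writing $w = \sum_j f_j\, dz_j$, one computes
\[
\mathrm{Re}(w) = \sum_j \big( \mathrm{Re}(f_j)\, dx_j - \mathrm{Im}(f_j)\, dy_j \big),
\]
and since the $dx_j, dy_j$ are a real basis of the cotangent space, $\mathrm{Re}(w)_p = 0$ forces $f_j(p)=0$ for all $j$, i.e. $w_p = 0$. Hence $\alpha$ has no zeros. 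On a compact K\"ahler manifold holomorphic one-forms are $d$-closed, so $\alpha$ is a real closed one-form, and if $[w] = f^\ast[\omega]$ with $\omega \in H^0(A,\Omega^1_A)$ then $[\alpha] = \mathrm{Re}\big(f^\ast[\omega]\big) = f^\ast\,\mathrm{Re}[\omega] \in f^\ast H^1(A,\R)$, as required.

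For \ref{condition B} $\Rightarrow$ \ref{condition A}, I would first rephrase the target over the torus. Since $A$ is a torus, $H^0(A,\Omega^1_A)$ consists of the translation-invariant forms, identified with $(T_0 A)^\ast$, and a pullback $f^\ast\omega$ vanishes at a point $x$ precisely when $\omega$ annihilates $\mathrm{im}(df_x)\subseteq T_0 A$. Thus producing the desired $w$ is equivalent to finding $\omega \in H^0(A,\Omega^1_A)$ lying outside the subvariety $\bigcup_{x\in X}\mathrm{Ann}\big(\mathrm{im}\, df_x\big)$ of $(T_0A)^\ast$. The cohomology class of $w$ is forced: writing $[\alpha] = f^\ast c$ and decomposing $c = \omega + \bar\omega$ under $H^1(A,\C)=H^{1,0}\oplus H^{0,1}$, the natural candidate $w := f^\ast\omega$ satisfies $2\,\mathrm{Re}[w] = [\alpha]$.

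The main obstacle is that being cohomologous to the nowhere-zero form $\alpha$ does not make $w$ nowhere zero: two cohomologous closed one-forms can have quite different zero loci, and projecting $c$ onto its $(1,0)$-part may introduce zeros that $\alpha$ does not have. There is no purely cohomological device to remove them, so the topological nowhere-vanishing of $\alpha$—equivalently, the smooth $S^1$-fibration $g$ supplied by Tischler—must be upgraded to holomorphic nowhere-vanishing. This is the point at which one must deform: the strategy is to pass through a homologically trivial deformation of $X$, compatible with the map to $A$, linking the nowhere-zero real class to a nowhere-zero holomorphic form, and it is here that the conjecture of Bobadilla and Koll\'ar is invoked. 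I would further expect the simplicity of $A$ to be what constrains the subspaces $\mathrm{im}(df_x)$, and hence the zero locus of $w$, tightly enough for the argument to close.
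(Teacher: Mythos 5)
First, be clear about the status of what you are proving: this statement is a conjecture that the paper itself does not prove. The paper establishes it only conditionally and only for simple $A$ (Theorem \ref{thm: main}: if Conjecture \ref{conjecture: BK} holds for $f$ and $A$ is simple, the equivalence holds), with unconditional cases only under the extra hypothesis $\dim A\geq \dim X-1$ (Corollary \ref{corollary: strong Kotschick conjecture}); the non-simple case is explicitly left open. Your proof of \ref{condition A} $\Rightarrow$ \ref{condition B} is correct and is exactly the direction the paper dismisses as easy: $\mathrm{Re}(w)$ is closed since holomorphic one-forms on compact K\"ahler manifolds are closed, it is nowhere zero by your pointwise computation, and its class lies in $f^\ast H^1(A,\R)$.

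For \ref{condition B} $\Rightarrow$ \ref{condition A} there is a genuine gap, and it sits precisely where the paper's work lies: you invoke Conjecture \ref{conjecture: BK}, but you never verify its hypothesis. Bobadilla--Koll\'ar applies to proper maps that are already known to be $\Z$-homology fiber bundles, and the bridge from condition \ref{condition B} to that hypothesis is the actual content of the paper. Concretely: Tischler's fibration $g:X\to S^1$ yields rank-one local systems $L=g^\ast L_\lambda$ with $H^\ast(X,L)=0$; since $g^\ast H^1(S^1,\R)\subseteq f^\ast H^1(A,\R)$, such $L$ come from $A$, and combining this with the generic vanishing theorem of Bhatt--Schnell--Scholze (Theorem \ref{thm: generic vanishing BSS}) gives $\chi({}^{p}R^jf_\ast\K_X)=0$ for all $j$ and all algebraically closed fields $\K$ (Proposition \ref{prop: vanishing euler characteristic for direct image of Albanese}); the Kr\"amer--Weissauer classification (Theorem \ref{thm: locally free criterion for perverse sheaves}) --- and this, not a constraint on the subspaces $\mathrm{im}(df_x)$, is where simplicity of $A$ is used --- then forces each ${}^{p}R^jf_\ast\K_X$ to be a shift of a local system, whence $f$ is a $\Z$-homology fiber bundle (Corollary \ref{cor: Albanese morphism is a Z homology fiber bundle}). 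Only at that point is BK applied, upgrading ``homology fiber bundle'' to ``smooth morphism'', after which $f^\ast\omega$ is nowhere vanishing for \emph{every} nonzero $\omega\in H^0(A,\Omega^1_A)$. This also corrects two inaccuracies in your sketch: the class of $w$ is not ``forced'' (condition \ref{condition A} imposes no relation between $[w]$ and $[\alpha]$, and the paper's $w$ is the pullback of an arbitrary nonzero invariant form), and BK does not enter through a ``homologically trivial deformation of $X$'' linking the two forms --- the homologically trivial family is the family of fibers of $f$, which is what the perverse-sheaf argument establishes. Without that argument, your proposal reduces to restating the conjecture as a hope, and even with it the implication remains conjectural in general.
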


In both conjectures, it is easy to see that Condition (A) implies Condition (B). The converse direction is the non-trivial part. 
These conjectures are known for surfaces \cite{Kotschick,Sch-IMRN} and for projective threefolds \cite{HS}.
(In \textit{loc.\ cit.}, Conjecture \ref{conjecture: Kotschick} is considered, but the arguments prove in fact the slightly stronger assertion from Conjecture \ref{conjecture: Kotschick stronger}, cf.\ \cite[Theorem 1.4]{HS}.)

The observation of this paper is that we can relate Conjecture \ref{conjecture: Kotschick stronger} to the following conjecture of Bobadilla and Koll\'ar \cite[Conjecture 3]{Bobadilla}. For a commutative ring $R$, a proper morphism $f:X\to Y$ between complex analytic spaces is called an $R$-homology fiber bundle if $Y$ has an open cover $Y=\cup_{i} U_i$ such that for every $i$ and every $y\in U_i$, the map induced by inclusion
\[ H_{\ast}(f^{-1}(y),R) \to H_{\ast}(f^{-1}(U_i),R) \quad \textrm{is an isomorphism}.\]
\begin{conjecture}\label{conjecture: BK}
    Let $f:X\to Y$ be a proper morphism between complex analytic spaces, where $X$ and $Y$ are both smooth. If $f$ is a $\Z$-homology fiber bundle, then $f$ is smooth.
\end{conjecture}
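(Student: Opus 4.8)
The plan is to translate the topological hypothesis into a sheaf-theoretic one and then to detect the failure of smoothness microlocally. First I would reformulate the $\Z$-homology fiber bundle condition: since $f$ is proper, proper base change identifies the stalk $(Rf_\ast\underline{\Z}_X)_y$ with the cohomology $R\Gamma(f^{-1}(y),\Z)$ of the fiber, while $R\Gamma(U,Rf_\ast\underline{\Z}_X)=R\Gamma(f^{-1}(U),\Z)$. Dualizing the homology statement, the hypothesis says exactly that for every small contractible $U$ and every $y\in U$ the restriction $R\Gamma(U,Rf_\ast\underline{\Z}_X)\to (Rf_\ast\underline{\Z}_X)_y$ is an isomorphism; for a constructible complex on the manifold $Y$ this is precisely the condition that $Rf_\ast\underline{\Z}_X$ be a locally constant complex. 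Thus the conjecture becomes: \emph{if $f$ is proper with $X,Y$ smooth and $Rf_\ast\underline{\Z}_X$ is locally constant, then $f$ is smooth.} Since we work in characteristic zero, generic smoothness gives a dense open over which $f$ is a submersion, and the goal is to show that the closed analytic non-smooth locus is empty; I would assume it nonempty and seek a contradiction.

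The next step is the microlocal comparison. By the Kashiwara--Schapira estimate for the singular support under a proper pushforward, together with $\mathrm{SS}(\underline{\Z}_X)=T^\ast_X X$, one has
\[ \mathrm{SS}(Rf_\ast\underline{\Z}_X)\ \subseteq\ \{(y,\xi)\in T^\ast Y\ :\ \exists\, x\in f^{-1}(y),\ {}^t(df_x)(\xi)=0\}, \]
and the right-hand set contains a nonzero covector over $y$ exactly when some $x\in f^{-1}(y)$ is a critical point of $f$. Local constancy of $Rf_\ast\underline{\Z}_X$ is equivalent to $\mathrm{SS}(Rf_\ast\underline{\Z}_X)$ being the zero section, so the content of the conjecture is the reverse inclusion: every critical point must actually contribute a nonzero direction to the singular support. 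To test a direction $\xi=d\ell_{y_0}$ at a critical value $y_0$, I would pick a generic holomorphic function $\ell$ near $y_0$ and use that membership $(y_0,\xi)\in\mathrm{SS}(Rf_\ast\underline{\Z}_X)$ is governed by the vanishing cycles $\phi_\ell(Rf_\ast\underline{\Z}_X)$; since nearby and vanishing cycles commute with proper pushforward, this equals $Rf_\ast\,\phi_{\ell\circ f}\underline{\Z}_X$, and $\ell\circ f\colon X\to\C$ has smooth source, so $\phi_{\ell\circ f}\underline{\Z}_X[\dim X-1]$ is a perverse sheaf supported on the critical points in the central fiber. The local-constancy hypothesis forces the hypercohomology of these vanishing cycles to vanish in every degree.

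The decisive step---and the one I expect to be the genuine obstacle---is to upgrade this hypercohomology vanishing to the sheaf-level statement $\phi_{\ell\circ f}\underline{\Z}_X=0$, for then a genuine critical point, whose Milnor fiber has nontrivial reduced homology, would contradict the vanishing and force the non-smooth locus to be empty. A perverse sheaf all of whose hypercohomology groups vanish need not itself be zero, and closing this gap is exactly what makes the Bobadilla--Koll\'ar conjecture difficult: one must rule out a conspiratorial cancellation among the vanishing cycles of different critical points. I expect the integral coefficients to be essential here---rationally such a self-dual perverse sheaf with balanced, cancelling cohomology is not a priori excluded---so the argument should exploit the finer integral and monodromy structure of the vanishing cycles (for instance the invariants and coinvariants under local monodromy, or an integrality constraint coming from the smoothness of $X$) rather than arguing with $\Q$-coefficients.
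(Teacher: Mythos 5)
You should first be aware that the statement you were asked to prove is Conjecture~\ref{conjecture: BK} --- the Bobadilla--Koll\'ar conjecture --- which the paper does not prove and does not claim to prove: it is an open conjecture, quoted from Bobadilla--Koll\'ar, assumed as a hypothesis in Theorem~\ref{thm: main}, and known only in relative dimension $\leq 1$ (by \cite[Proposition 10]{Bobadilla}, cited in the paper). So there is no proof in the paper to compare against, and no complete proof should have been expected.

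Your proposal is a reasonable sketch of an attack, and its preliminary reductions are sound: the equivalence between ``$\Z$-homology fiber bundle'' and local constancy of $Rf_\ast\Z_X$ is exactly how the paper itself uses the conjecture (cf.\ the proof of Corollary~\ref{cor: Albanese morphism is a Z homology fiber bundle}), and the microlocal setup (Kashiwara--Schapira estimate, commutation of vanishing cycles with proper pushforward) is correct. But the decisive step is missing, as you yourself admit, and this gap is not a technicality --- it is the entire content of the conjecture. Concretely: local constancy of $Rf_\ast\Z_X$ gives you $\phi_\ell(Rf_\ast\Z_X)=0$, hence $Rf_\ast\bigl(\phi_{\ell\circ f}\Z_X\bigr)=0$, and you must conclude $\phi_{\ell\circ f}\Z_X=0$. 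The implication ``$Rf_\ast\cP=0$ for a nonzero perverse sheaf $\cP$ is impossible'' is simply false in general: if $f$ contracts a curve $C$ of positive genus in the critical locus to a point, and $\cP$ restricts there to $L[1]$ for a nontrivial rank-one local system $L$ on $C$, then $(Rf_\ast\cP)_y=R\Gamma(C,L)=0$ even though $\cP\neq 0$. So the conjecture amounts to showing that such conspiratorial configurations cannot arise as vanishing cycles of a map with smooth total space, and your closing paragraph offers only the hope that integral or monodromy structure rules this out, not an argument. For what it is worth, the route that has produced the known cases is numerical rather than sheaf-theoretic: A'Campo's theorem that the Lefschetz number of the monodromy on the Milnor fiber vanishes at a singular point, versus the value $1$ forced by local constancy --- a trace argument is immune to the cancellation problem that blocks your sheaf-level approach, which is why it succeeds in low relative dimension.
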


Our main result is
\begingroup
\setcounter{tmp}{\value{thm}}
\setcounter{thm}{0} 
\renewcommand\thethm{\Alph{thm}}
\begin{thm}\label{thm: main}
Let $X$ be a compact K\"ahler manifold and let $f:X\to A$ be a morphism to a simple complex torus $A$. Assume that Conjecture \ref{conjecture: BK} holds for the morphism $f$, then Conjecture \ref{conjecture: Kotschick stronger} holds for $f$.
\end{thm}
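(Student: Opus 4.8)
The plan is to prove the nontrivial implication \ref{condition B} $\Rightarrow$ \ref{condition A}, the reverse being immediate from the observation below. Throughout write $A = V/\Lambda$ with $V = T_0 A$, so that $H^0(A,\Omega^1_A) = V^\ast$ and $H^1(A,\R) = \mathrm{Hom}_\R(V,\R)$, and for $\eta \in V^\ast$ put $\ell = \mathrm{Re}\,\eta \in H^1(A,\R)$. The starting point is the pointwise identity
\[ Z(f^\ast\eta) = Z(f^\ast\ell), \]
where $Z(\cdot)$ denotes the zero locus: indeed $(f^\ast\eta)_x$ vanishes iff the complex subspace $\mathrm{im}(df_x)\subseteq V$ lies in the complex hyperplane $\ker_\C \eta$, while $(f^\ast\ell)_x$ vanishes iff $\mathrm{im}(df_x)$ lies in the real hyperplane $\ker_\R\ell$; since $\ker_\C\eta$ is the largest complex subspace of $\ker_\R\ell$ and $\mathrm{im}(df_x)$ is complex, the two conditions coincide. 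This already yields \ref{condition A} $\Rightarrow$ \ref{condition B} (take $\alpha = f^\ast\mathrm{Re}\,\eta$), and it shows that \ref{condition A} is equivalent to the existence of some nonzero $\eta\in V^\ast$ whose \emph{translation-invariant} real part $f^\ast\ell$ is nowhere vanishing. In particular, if $f$ is a submersion, then $\mathrm{im}(df_x) = V$ for all $x$, every nonzero $f^\ast\eta$ is nowhere vanishing, and \ref{condition A} follows. Thus the whole problem reduces to proving that $f$ is smooth.

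Next I would feed condition \ref{condition B} into Conjecture \ref{conjecture: BK}. Since $f\colon X\to A$ is proper and $X$, $A$ are smooth, Conjecture \ref{conjecture: BK} applies to $f$, so it suffices to show that \ref{condition B} forces $f$ to be a $\Z$-homology fiber bundle; smoothness of $f$ — and hence \ref{condition A} — then follows from the assumed validity of Conjecture \ref{conjecture: BK} for $f$. (Note that once $f$ is a $\Z$-homology fiber bundle over the connected base $A$ it is automatically surjective, so no separate surjectivity discussion is needed.) To extract the homological input, by Tischler's argument I would replace $\alpha$ by a nearby nowhere-vanishing closed one-form whose rational cohomology class still lies in $f^\ast H^1(A,\R)$; after scaling, this class is $f^\ast\ell$ with $\ell\in H^1(A,\Z)$ primitive, and $\ell$ defines a surjective homomorphism $\bar\ell\colon A\to S^1$. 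Let $A_\infty\to A$ and $X_\infty = X\times_A A_\infty\to X$ be the associated infinite cyclic covers, with $f_\infty\colon X_\infty\to A_\infty$. The nowhere-vanishing form makes the induced map $X_\infty\to\R$ a proper $\cC^\infty$ fiber bundle, so $X_\infty$ is a product; consequently the fibers of $f_\infty$ — which coincide with the fibers of $f$ — have locally constant homology along the $\ell$-direction.

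The hard part will be the passage from this single-direction statement to the full $\Z$-homology fiber bundle property over all of $A$, and this is exactly where simplicity of $A$ should enter: the one-parameter subgroup generated by the $\ell$-direction is \emph{dense} because $A$ has no proper subtori, so homological constancy along it, together with the upper semicontinuity of fiber homology and the fact that the locus where the fiber-bundle condition fails is a closed analytic subset, ought to force that locus to be stable under a dense subgroup and hence empty. Carrying this out rigorously — in particular identifying the failure locus with a translation-stable analytic subset and controlling the homology of the fibers of $f$ (rather than of the larger fibers of the Tischler fibration) uniformly — is the crux of the argument and the step I expect to cost the most work. Once it is established that $f$ is a $\Z$-homology fiber bundle, Conjecture \ref{conjecture: BK} gives that $f$ is smooth, and the reduction of the first paragraph then produces the desired nowhere-vanishing holomorphic one-form, completing \ref{condition B} $\Rightarrow$ \ref{condition A}.
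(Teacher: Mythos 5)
Your overall skeleton agrees with the paper's: reduce \ref{condition B} $\Rightarrow$ \ref{condition A} to smoothness of $f$ (your first paragraph is correct, and is essentially the paper's final step), and obtain smoothness from Conjecture \ref{conjecture: BK} once $f$ is known to be a $\Z$-homology fiber bundle. But the implication that condition \ref{condition B} forces $f$ to be a $\Z$-homology fiber bundle is the entire mathematical content of the theorem, and your proposal does not prove it: you defer it as ``the crux'' and offer only a heuristic. Moreover, the heuristic does not work as sketched. First, the product structure of the infinite cyclic cover $X_\infty \cong F\times \R$ only gives homological information about the Tischler fibers $F$, i.e.\ about $f$-preimages of sets homotopic to translates of a real codimension-one subtorus of $A$; it gives no statement whatsoever about the individual fibers $f^{-1}(a)$, so there is no ``single-direction constancy'' of fiber homology to propagate in the first place. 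Second, the propagation mechanism is absent: no group acts on $X$ compatibly with translations of $A$, so the locus in $A$ where the homology-fiber-bundle condition fails---even granting that it is a closed analytic subset---has no reason to be stable under any subgroup of $A$, dense or not; density of a one-parameter subgroup therefore yields nothing.

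The paper's route to the missing step is sheaf-theoretic, and this is where simplicity of $A$ actually enters (not through density of subgroups). From the Tischler fibration $g\colon X\to S^1$ one pulls back a rank-one $\K$-local system with generic monodromy, for $\K$ an arbitrary algebraically closed field; a Leray argument over $S^1$ shows it has vanishing cohomology on $X$, and since $g^\ast H^1(S^1,\R)\subseteq f^\ast H^1(A,\R)$ it is pulled back from $A$, so by semicontinuity a \emph{generic} rank-one local system $L_A$ on $A$ satisfies $H^\ast(X,f^\ast L_A)=0$. Combining this with the generic vanishing theorem of Bhatt--Schnell--Scholze (Theorem \ref{thm: generic vanishing BSS}) and the perverse Leray spectral sequence gives $\chi({}^{p}R^jf_{\ast}\K_X)=0$ for all $j$ and all algebraically closed $\K$ (Proposition \ref{prop: vanishing euler characteristic for direct image of Albanese}). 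Simplicity of $A$ is then used through the Kr\"amer--Weissauer theorem (Theorem \ref{thm: locally free criterion for perverse sheaves}): a simple perverse sheaf with vanishing Euler characteristic on a simple torus is a shift of a local system, so each ${}^{p}R^jf_{\ast}\K_X$ is locally constant; finally one descends from all algebraically closed fields to $\Z$ (Lemma \ref{lemma: criterion for locally freeness}) to conclude that $Rf_{\ast}\Z_X$ is locally constant, i.e.\ $f$ is a $\Z$-homology fiber bundle (Corollary \ref{cor: Albanese morphism is a Z homology fiber bundle}). Some input of this kind---generic vanishing plus the classification of perverse sheaves with vanishing Euler characteristic on simple tori---or a genuine substitute for it is what your proposal lacks.
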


To prove the theorem above, we show that if $f:X\to A$ is a morphism to a simple complex torus $A$ such that there is a closed real 1-form $\alpha$ on $X$ without zeros and such that $[\alpha]\in f^\ast H^1(A,\R)$, then $f$ is a $\mathbb{Z}$-homology fiber bundle, see Proposition \ref{prop: vanishing euler characteristic for direct image of Albanese} and Corollary \ref{cor: Albanese morphism is a Z homology fiber bundle}.
This generalizes a recent result of Dutta--Hao--Liu \cite[Corollary 1.6]{DHL}, who proved that $f$ is a $\mathbb{C}$-homology fiber bundle (under the assumption that $X$ is projective). 
In order to obtain the integral statement, one essentially has to prove that  $f$ is a $\K$-homology fiber bundle for any infinite field $\K$.
To this end we use a different method than in \cite{DHL}: instead of the Kashiwara estimate for $\C$-perverse sheaves, we use 
the generic vanishing theorem for $\K$-perverse sheaves by Bhatt--Schnell--Scholze and a result of Kr\"amer and Weissauer on classification of $\K$-perverse sheaves with vanishing Euler characteristics. 

In the case of non-simple tori, Conjectures \ref{conjecture: Kotschick stronger} does not directly follow from \ref{conjecture: BK}.
Indeed, there are smooth complex projective threefolds $X$ such that for any morphism $f:X\to A$ to a positive-dimensional complex torus $A$, $f$ is not even a $\mathbb Q$-homology fiber bundle (e.g.\ this happens for the blow-up of $E_1\times E_2\times \mathbb P^1$ along the union of $ E_1\times 0\times 0$ and $0\times E_2\times\infty$, where $E_1,E_2$ denote non-isogeneous elliptic curves).

The assumption that $A$ is simple is automatic in the case where $A$ is an elliptic curve and so the above theorem gives good evidence that Conjecture \ref{conjecture: Kotschick} may hold in the case where $b_1(X)=2$; an interesting wide open special case.

It is straightforward to see that Conjecture \ref{conjecture: BK} holds for any proper morphism of relative dimension $\leq 0$.
By \cite[Proposition 10]{Bobadilla}, Conjecture \ref{conjecture: BK}  also holds for proper morphisms of relative dimension 1. 
Therefore we have 
\begin{corollary}\label{corollary: strong Kotschick conjecture}
Let $X$ be a compact K\"ahler manifold and let $f:X\to A$ be a morphism to a simple complex torus $A$.
Assume that $\dim A\geq \dim X-1$.
Then Conjecture \ref{conjecture: Kotschick stronger} holds for $f$.
\end{corollary}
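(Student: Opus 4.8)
The plan is to deduce the corollary from Theorem~\ref{thm: main} by verifying its hypothesis: that Conjecture~\ref{conjecture: BK} holds for $f$. Since Theorem~\ref{thm: main} upgrades the validity of Conjecture~\ref{conjecture: BK} for $f$ to the full equivalence of Conjecture~\ref{conjecture: Kotschick stronger}, it is enough to check the implication ``$f$ is a $\Z$-homology fiber bundle $\Rightarrow$ $f$ is smooth'' for our particular $f$. Accordingly, I would assume that $f$ is a $\Z$-homology fiber bundle and aim to show that $f$ is smooth; if $f$ is not such a bundle the implication is vacuous.

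First I would pin down the fibers of $f$. I claim $f$ is surjective: its image $f(X)$ is closed since $f$ is proper, and it is open because for any $y\in f(X)$ the defining isomorphism identifies $H_\ast(f^{-1}(y'),\Z)$ with $H_\ast(f^{-1}(U_i),\Z)\cong H_\ast(f^{-1}(y),\Z)\neq 0$ for every $y'$ in a cover element $U_i\ni y$, so that each such $y'$ lies in $f(X)$; connectedness of $A$ then gives $f(X)=A$. Chaining the local isomorphisms across the overlaps of the cover over the connected base $A$, all fibers of $f$ acquire isomorphic integral homology, and hence the same complex dimension $d$, detected by the top nonvanishing homological degree $2d$. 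As $f$ is surjective, the generic fiber has dimension $\dim X-\dim A$, so $d=\dim X-\dim A$.

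The assumption $\dim A\geq \dim X-1$ now forces $d\leq 1$. (Automatically $d\geq 0$; in particular, if $\dim A>\dim X$ then $f$ cannot be surjective, hence is never a $\Z$-homology fiber bundle, and Conjecture~\ref{conjecture: BK} holds vacuously for $f$.) Thus $f$ is a proper $\Z$-homology fiber bundle all of whose fibers have dimension $\leq 1$. By the two cases recalled above---the trivial relative dimension $\leq 0$ case and \cite[Proposition~10]{Bobadilla} in relative dimension $1$---the morphism $f$ is smooth. This verifies Conjecture~\ref{conjecture: BK} for $f$, and Theorem~\ref{thm: main} then yields Conjecture~\ref{conjecture: Kotschick stronger} for $f$.

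I expect the main obstacle to be the second step: controlling the dimension of \emph{every} fiber rather than only the generic one. The numerical hypothesis $\dim A\geq\dim X-1$ by itself bounds only the generic fiber dimension, whereas \cite[Proposition~10]{Bobadilla} needs a bound on all fibers. The homology fiber bundle hypothesis is precisely what closes this gap, by forcing the fibers to be equidimensional; this is why the argument is organized around the hypothesis of Conjecture~\ref{conjecture: BK} rather than trying to bound fiber dimensions for an arbitrary morphism with $\dim A\geq \dim X-1$.
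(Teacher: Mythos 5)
Your proposal is correct and follows essentially the same route as the paper: both arguments come down to the observation that a $\Z$-homology fiber bundle over the connected torus $A$ is necessarily surjective, so that $\dim A\geq \dim X-1$ bounds the fiber dimension by one, at which point the known cases of Conjecture \ref{conjecture: BK} (the trivial relative dimension $\leq 0$ case and \cite[Proposition 10]{Bobadilla}) give smoothness of $f$, and hence condition \ref{condition A}. The only differences are organizational and in level of detail: the paper argues directly from condition \ref{condition B} via Corollary \ref{cor: Albanese morphism is a Z homology fiber bundle} rather than first verifying Conjecture \ref{conjecture: BK} for $f$ and then invoking Theorem \ref{thm: main} as a black box, and your explicit equidimensionality step (isomorphic fiber homology, with the top nonvanishing degree detecting dimension, forces all fibers to have dimension $\dim X-\dim A$) makes precise a point the paper's terse proof leaves implicit --- surjectivity alone only controls the generic fiber, so this clarification is genuinely needed before \cite[Proposition 10]{Bobadilla} can be applied.
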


\begin{corollary}\label{corollary: Kotschick conjecture large betti number}
Let $X$ be a compact K\"ahler manifold such that $\Alb(X)$ is simple and $b_1(X)\geq 2\dim X-2$. Then Conjecture \ref{conjecture: Kotschick} holds for $X$.
\end{corollary}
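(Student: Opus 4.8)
The plan is to feed the Albanese morphism into Corollary \ref{corollary: strong Kotschick conjecture}. Write $A=\Alb(X)$ and let $f\colon X\to A$ be the Albanese morphism. By hypothesis $A$ is simple, so to invoke Corollary \ref{corollary: strong Kotschick conjecture} the only analytic input I must still supply is the dimension inequality $\dim A\ge \dim X-1$; everything else will then be a translation of Conjecture \ref{conjecture: Kotschick stronger} for $f$ back into Conjecture \ref{conjecture: Kotschick} for $X$.

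First I would record the standard Hodge-theoretic identity $\dim A=h^{1,0}(X)=\tfrac12 b_1(X)$, which holds because $X$ is compact K\"ahler and $A=H^0(X,\Omega^1_X)^{\vee}/H_1(X,\Z)$. The hypothesis $b_1(X)\ge 2\dim X-2$ then reads $\dim A=\tfrac12 b_1(X)\ge \dim X-1$, which is exactly the dimension assumption of Corollary \ref{corollary: strong Kotschick conjecture}. Since $A$ is simple, that corollary applies to $f$ and tells us that Conjecture \ref{conjecture: Kotschick stronger} holds for the Albanese morphism. Note also that simplicity of $A$ forces $\dim A\ge 1$, so the inequality is not vacuous and $f$ is genuinely a morphism to a positive-dimensional simple torus.

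It remains to see that Conjecture \ref{conjecture: Kotschick stronger} for $f$ is literally Conjecture \ref{conjecture: Kotschick} for $X$. The crucial point is that the Albanese map induces isomorphisms: $f^{\ast}\colon H^0(A,\Omega^1_A)\to H^0(X,\Omega^1_X)$ is an isomorphism by the very construction of the Albanese, and $f^{\ast}\colon H^1(A,\R)\to H^1(X,\R)$ is an isomorphism because $f_{\ast}\colon H_1(X,\Z)\to H_1(A,\Z)$ is the quotient by torsion. Consequently the cohomological side-conditions $[w]\in f^{\ast}H^0(A,\Omega^1_A)$ in \ref{condition A} and $[\alpha]\in f^{\ast}H^1(A,\R)$ in \ref{condition B} are automatically satisfied by every holomorphic, respectively real closed, one-form on $X$. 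Thus \ref{condition A} collapses to condition (A) of Conjecture \ref{conjecture: Kotschick} and \ref{condition B} collapses to condition (B), so the equivalence $\ref{condition A}\Leftrightarrow\ref{condition B}$ furnished by Corollary \ref{corollary: strong Kotschick conjecture} is precisely the equivalence asserted in Conjecture \ref{conjecture: Kotschick}.

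I do not expect a genuine obstacle in this final deduction: the whole analytic content—namely that the relevant morphism is a $\Z$-homology fiber bundle and hence smooth in the relative-dimension-$\le 1$ range—has already been absorbed into Theorem \ref{thm: main} and Corollary \ref{corollary: strong Kotschick conjecture}. The only care required is bookkeeping: translating the numerical hypothesis on $b_1(X)$ into the dimension bound on $A$ via $\dim A=\tfrac12 b_1(X)$, and checking that $f^{\ast}$ is an isomorphism on $H^0(\Omega^1)$ and on $H^1(\,\cdot\,,\R)$ so that the extra cohomological constraints in Conjecture \ref{conjecture: Kotschick stronger} are vacuous for the Albanese morphism. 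With these two standard facts in hand the corollary follows formally.
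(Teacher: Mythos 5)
Your proposal is correct and is exactly the paper's argument: the paper's proof simply says the corollary is a direct consequence of Corollary \ref{corollary: strong Kotschick conjecture}, applied to the Albanese morphism, and the details you supply ($\dim \Alb(X)=\tfrac12 b_1(X)\geq \dim X-1$, plus the fact that $f^{\ast}$ is an isomorphism on $H^0(\Omega^1)$ and $H^1(\cdot,\R)$ so that Conjecture \ref{conjecture: Kotschick stronger} for the Albanese map specializes to Conjecture \ref{conjecture: Kotschick}) are precisely the bookkeeping the authors leave implicit, the latter point being stated in their introduction.
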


By a result of Popa and Schnell \cite{PS}, smooth projective varieties of general type do not admit nowhere vanishing holomorphic one-forms. 
If the Albanese variety is simple, this had earlier been proven in \cite[Theorem 1.4]{HK}. 
Therefore we have
\begin{corollary}
    Let $X$ be a smooth projective variety of general type such that $\Alb(X)$ is simple and $b_1(X)\geq 2\dim X-2$. Then the underlying differential manifold of $X$ cannot be a $\cC^{\infty}$-fiber bundle over the circle.  
\end{corollary}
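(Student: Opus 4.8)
The plan is to deduce the statement by contradiction, combining Corollary \ref{corollary: Kotschick conjecture large betti number} with the nonexistence of nowhere vanishing holomorphic one-forms on varieties of general type; given the preceding results the argument is purely formal, and the only genuine input to assemble is the logical implication $\text{(B)}\Rightarrow\text{(A)}$ supplied by Kotschick's conjecture, which we have already established for such $X$.

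First I would suppose, for contradiction, that the underlying differential manifold of $X$ is a $\cC^{\infty}$-fiber bundle over the circle $S^1$. By Tischler's theorem \cite{Tischler} this is equivalent to the assertion that $X$ carries a real closed one-form without zeros, that is, to Condition (B) of Conjecture \ref{conjecture: Kotschick}. Now, since by hypothesis $\Alb(X)$ is simple and $b_1(X)\geq 2\dim X-2$, Corollary \ref{corollary: Kotschick conjecture large betti number} tells us that Conjecture \ref{conjecture: Kotschick} holds for $X$; in particular Condition (B) implies Condition (A). Hence $X$ would admit a holomorphic one-form without zeros. This contradicts the theorem of Popa and Schnell \cite{PS} (or, in the present case of simple Albanese, the earlier \cite[Theorem 1.4]{HK}), by which a smooth projective variety of general type admits no nowhere vanishing holomorphic one-form. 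The contradiction shows that $X$ cannot be a $\cC^{\infty}$-fiber bundle over the circle.

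I do not expect a real obstacle here, since the entire analytic and sheaf-theoretic content has already been isolated in Theorem \ref{thm: main} and Corollary \ref{corollary: Kotschick conjecture large betti number}, while the general-type input is quoted from \cite{PS,HK}. The only point deserving care is the compatibility of the hypotheses: one must check that a single $X$ can simultaneously satisfy the simplicity and numerical conditions of Corollary \ref{corollary: Kotschick conjecture large betti number} and the general-type condition of \cite{PS}. This is immediate, as these are precisely the standing hypotheses of the statement; thus the proof reduces to recording the chain of implications above.
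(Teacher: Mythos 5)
Your proof is correct and follows exactly the paper's intended argument: the corollary is deduced by combining Corollary \ref{corollary: Kotschick conjecture large betti number} (which yields the implication (B) $\Rightarrow$ (A) for such $X$) with the Popa--Schnell theorem \cite{PS} (or \cite[Theorem 1.4]{HK} in the simple-Albanese case) ruling out nowhere-vanishing holomorphic one-forms on varieties of general type. The contrapositive/contradiction framing and the appeal to Tischler's equivalence are precisely how the paper assembles the statement, so there is nothing to add.
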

\endgroup

\subsection*{Acknowledgement}
We would like to thank Thomas Kr\"amer and Botong Wang for helpful discussion, a special thank to Botong Wang for pointing out the reference \cite{MS} and the first arxiv version of \cite{LMWBobadilla}.
We are grateful to the excellent referee for comments that improved the exposition.
SS is supported by the European Research Council (ERC) under the European Union's Horizon 2020 research and innovation programme under grant agreement No 948066. 
RY is supported by a Riemann Fellowship of Riemann Center for Geometry and Physics at Leibniz Universit\"at Hannover during the preparation of this paper.

\section{Perverse sheaves on abelian varieties}

In this section, let us review some results of Bhatt-Schnell-Scholze \cite{BSS} and Kr\"amer-Weissauer \cite{KW} about perverse sheaves on abelian varieties.

Let $A$ be a compact complex torus and let $\K$ be any field. 

\begin{thm}{\cite[Theorem 1.1]{BSS}}\label{thm: generic vanishing BSS}
    Let $\cP$ be a $\K$-perverse sheaf on $A$. Then for a generic rank one $\K$-local system $L$ on $A$, we have 
        \[ H^i(A,\cP\otimes_{\K}L)=0, \quad \textrm{for all } i\neq 0.\] 
\end{thm}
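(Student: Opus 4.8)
The plan is to convert the pointwise assertion (vanishing for each generic $L$) into a single homological statement over the whole space of rank one local systems, via the Mellin transform, and then to prove a concentration result there. Since $\pi_1(A)\cong\Z^{2g}$ with $g=\dim_\C A$, the rank one $\K$-local systems on $A$ are the closed points of the character torus $\bT=\mathrm{Spec}\,R$, where $R=\K[\pi_1(A)]\cong\K[t_1^{\pm1},\dots,t_{2g}^{\pm1}]$ is a Laurent polynomial ring (regular, Noetherian, a domain). Let $\cU$ be the universal rank one local system of free $R$-modules on $A$, attached to the tautological representation $\pi_1(A)\to R^\times$. I would define the Mellin transform $M(\cF):=\mathbf R\Gamma(A,\cF\otimes_\K\cU)\in D^b_{\mathrm{coh}}(R)$. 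Choosing a finite triangulation of $A$ adapted to the constructible structure of $\cF$ represents $M(\cF)$ by a bounded complex of finitely generated free $R$-modules, so boundedness, coherence, and perfectness follow from compactness of $A$. Because $\cU$ is $R$-flat and this complex is perfect, base change gives $\mathbf R\Gamma(A,\cF\otimes_\K L)\cong M(\cF)\otimes^{\mathbb L}_R\kappa(L)$ for each closed point $L\in\bT$, hence a universal-coefficients spectral sequence $\mathrm{Tor}^R_{-p}(\cH^q(M(\cF)),\kappa(L))\Rightarrow H^{p+q}(A,\cF\otimes_\K L)$. Over the generic point of $\bT$ all modules become flat, so the theorem \emph{reduces to} (in fact is equivalent to) the algebraic statement that $\cH^i(M(\cP))$ is a torsion $R$-module for every $i\neq0$, equivalently that $M(\cP)\otimes_R\mathrm{Frac}(R)$ is concentrated in degree $0$.

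Next I would reduce to a single simple perverse sheaf and exploit duality. Perverse sheaves on $A$ form a finite length abelian category, and $M$ is triangulated, so a Jordan--Hölder filtration of $\cP$ produces long exact sequences in $\cH^\bullet(M(-))$. Since the torsion $R$-modules form a Serre subcategory, a short diagram chase in these long exact sequences reduces the torsion statement to the case $\cP=\IC_Z(L')$ with $Z\subseteq A$ an irreducible closed analytic subset and $L'$ an irreducible local system on a smooth dense open of $Z$. I would then record the duality compatibility $M(\mathbb D_A\cP)\cong\iota^\ast\mathbb D_R(M(\cP))$ up to shift, where $\mathbb D_A$ is Verdier duality, $\mathbb D_R$ is Grothendieck duality on the regular ring $R$, and $\iota:\bT\to\bT$ is the inversion $L\mapsto L^{-1}$; this follows from Poincaré--Verdier duality for $A\to\mathrm{pt}$ together with $\cU^\vee\cong\iota^\ast\cU$. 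Because $\mathbb D_A\cP$ is again perverse and $\mathbb D_R,\iota^\ast$ preserve the codimension of supports, it suffices to prove the \emph{one-sided} estimate that $\cH^i(M(\cP))$ is torsion for all $i>0$; the case $i<0$ then follows by duality, and $i=0$ is allowed.

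For the one-sided estimate I would aim for the sharper bound $\mathrm{codim}_\bT\supp\,\cH^i(M(\cP))\ge i$ for $i>0$ (only positivity of the codimension, i.e.\ torsion, is needed). In the projective case this comes from Artin vanishing: for an ample divisor $D$ with affine complement $j:U\hookrightarrow A$, the restriction $\cP|_U$ is perverse on an affine variety, so $\mathbb H^{>0}(U,\cP|_U\otimes L)=0$; feeding the twisted distinguished triangle relating $A$, $U$, and $D$ into $M$, and inducting on $\dim A$ through the perverse restriction of $\cP$ to $D$, controls the positive-degree cohomology of $M(\cP)$. For a general compact complex torus, where no ample divisor exists, I would instead induct using the finite multiplication maps $[n]:A\to A$ and restriction to generic translated subtori, combined with a Noether normalization of $R$, so as to reduce the codimension bound to a one-dimensional base.

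The genuine obstacle is precisely this one-sided concentration, and it is exactly where working over an arbitrary field $\K$ and an arbitrary compact torus diverges from the classical Green--Lazarsfeld/Schnell picture. Over $\C$ one can invoke Hodge theory and the Hodge-theoretic strictness of the Fourier--Mukai/Laumon transform to force the vanishing; over a general $\K$ these tools disappear, and the codimension bound must be extracted from purely sheaf-theoretic input, namely Artin vanishing together with the perverse support and cosupport inequalities, while keeping careful track of the $R$-torsion phenomena that are invisible after passing to a coefficient field. I expect the non-algebraic torus case to be the most delicate point, requiring one to replace the ample-divisor input by the topology of the universal cover $\C^g$ (a Morse- or Novikov-type analysis of the twisted cohomology) and to reconcile this topological input with the algebraic structure of $\bT$.
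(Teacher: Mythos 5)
First, a point of calibration: the paper does not prove this statement at all. It is quoted verbatim, with attribution, as \cite[Theorem 1.1]{BSS}; the appendix of the paper proves only the Kr\"amer--Weissauer-type result (Theorem \ref{thm: locally free criterion for perverse sheaves}), not this one. So your proposal must stand on its own as a proof of the Bhatt--Schnell--Scholze theorem, and it does not. The preparatory layer is correct and is indeed the standard framework (it is also how \cite{BSS} set things up): identifying rank one $\K$-local systems with points of $\mathrm{Spec}\,\K[\pi_1(A)]$, defining the Mellin transform $M(\cF)=\mathbf{R}\Gamma(A,\cF\otimes_{\K}\cU)$, establishing perfectness and base change, reformulating the theorem as ``$\cH^i(M(\cP))$ is $R$-torsion for $i\neq 0$'', reducing via Jordan--H\"older and the Serre-subcategory property of torsion modules to simple $\cP$, and using the duality $M(\mathbb{D}_A\cP)\cong\iota^{\ast}\mathbb{D}_R(M(\cP))$ to halve the problem. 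All of this is fine, but it is bookkeeping; none of it touches the actual content of the theorem.

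The gap is the one-sided concentration estimate, which is the entire mathematical substance of \cite[Theorem 1.1]{BSS}, and the route you sketch for it would fail concretely. A general compact complex torus need not be algebraic, so there is no ample divisor, no affine open, and no Artin vanishing; and the substitute you propose has nothing to work with: a \emph{simple} torus --- precisely the case this paper needs, since Theorem \ref{thm: locally free criterion for perverse sheaves} and the main results assume $A$ simple --- has no nontrivial subtori to restrict to, and a very general torus of dimension $\geq 2$ contains no positive-dimensional proper analytic subvarieties at all. The multiplication maps $[n]:A\to A$ are finite self-maps, so they neither lower the dimension nor provide a fibration over a one-dimensional base, and Noether normalization of $R$ is a statement about the character torus, not about $A$, so it cannot replace geometric input on $A$. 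Even your projective-case sketch is incomplete as stated: the restriction of $\cP$ to an ample divisor $D$ lies only in ${}^{p}D^{[-1,0]}$ and $D$ is not an abelian variety, so the induction hypothesis (formulated for tori) does not apply to it without substantial reworking. You say yourself that this step is ``the genuine obstacle''; that is an accurate diagnosis (and your closing remark that the non-algebraic case should be attacked through the topology of the universal cover points in a sensible direction), but it means what you have written is a correct reduction plus an identification of the hard part, not a proof, and it cannot be completed along the specific lines you propose.
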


\begin{definition}
     For any $\K$-perverse sheaf $\cP$ on $A$, the \emph{Euler characteristic} of $\cP$ is defined by
    \[ \chi(\cP)\colonequals \sum_{i} (-1)^i \dim_{\K} H^i(A,\cP).\]
\end{definition}

 Under the additional assumption that $A$ is simple, we have the following important result.

\begin{thm}[Kr\"amer-Weissauer]\label{thm: locally free criterion for perverse sheaves}
    Let $\cP$ be a simple $\K$-perverse sheaf on $A$. Suppose that $\chi(\cP)=0$ and $A$ is simple, then $\cP$ is a shift of a local system.

\end{thm}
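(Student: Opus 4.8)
The plan is to detect the vanishing of $\chi(\cP)$ microlocally, from the characteristic variety of $\cP$ inside the cotangent bundle, and to use that on a \emph{simple} torus every proper closed subvariety is non-degenerate. Write $g=\dim A$ and $V=\mathrm{Lie}(A)$; the group structure trivializes the cotangent bundle, $T^{\ast}A\cong A\times V^{\ast}$, and I denote by $\mathrm{pr}\colon T^{\ast}A\to V^{\ast}$ the second projection. Recall that the characteristic variety $\mathrm{Ch}(\cP)\subseteq T^{\ast}A$ is purely $g$-dimensional (Lagrangian), and that each of its irreducible components is the closure of a conormal variety $T^{\ast}_{W}A$ for some irreducible locally closed $W\subseteq\supp(\cP)$; for the open stratum $W=A$ this component is the zero section $A\times\{0\}$.

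I would first assemble two inputs. On the one hand, the generic vanishing theorem of Bhatt--Schnell--Scholze (Theorem \ref{thm: generic vanishing BSS}) gives, for a generic rank one local system $L$, that $H^{i}(A,\cP\otimes_{\K}L)=0$ for $i\neq 0$; hence $\chi(\cP)=\dim_{\K}H^{0}(A,\cP\otimes_{\K}L)\geq 0$, and $\chi(\cP)=0$ if and only if $H^{\ast}(A,\cP\otimes_{\K}L)=0$ for generic $L$. On the other hand, the microlocal index theorem (in the form adapted to abelian varieties, which is part of the work of Kr\"amer--Weissauer \cite{KW}) identifies this generic cohomology count with the degree of $\mathrm{pr}$ along $\mathrm{Ch}(\cP)$: twisting by $L$ amounts to choosing a spectral parameter $\xi\in V^{\ast}$, and for generic $\xi$ the number $\dim_{\K}H^{0}(A,\cP\otimes_{\K}L)$ equals the number of points, weighted by the multiplicities of $\mathrm{Ch}(\cP)$, in the fibre $\mathrm{pr}^{-1}(\xi)\cap\mathrm{Ch}(\cP)$. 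Combining the two, $\chi(\cP)=0$ if and only if no irreducible component of $\mathrm{Ch}(\cP)$ dominates $V^{\ast}$ under $\mathrm{pr}$.

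The geometric core is the next step. For an irreducible $W\subseteq A$ the component $\overline{T^{\ast}_{W}A}$ dominates $V^{\ast}$ precisely when the Gauss map of $W$ is generically finite, which by a classical result on subvarieties of abelian varieties happens exactly when $W$ is non-degenerate, i.e.\ when its translation stabilizer $\{a\in A:a+\overline{W}=\overline{W}\}$ is finite. Here the hypothesis that $A$ is simple is decisive: the identity component of any such stabilizer is a subtorus of $A$, hence is trivial or all of $A$, so every irreducible $W$ with $\overline{W}\subsetneq A$ is automatically non-degenerate and its conormal dominates $V^{\ast}$. By the previous paragraph such a $W$ cannot occur among the strata of $\cP$ once $\chi(\cP)=0$; the only surviving component is then the zero section over $W=A$. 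Thus $\chi(\cP)=0$ forces $\supp(\cP)=A$ and $\mathrm{Ch}(\cP)=A\times\{0\}$.

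Finally, a perverse sheaf whose characteristic variety is the zero section is lisse, so $\cP\cong\cF[g]$ for a local system $\cF$ on $A$; simplicity of $\cP$ just says $\cF$ is irreducible, and in any case $\cP$ is a shift of a local system, as desired. The step I expect to be the real obstacle is the index computation over an \emph{arbitrary} coefficient field $\K$: one cannot quote complex Morse theory verbatim, and must instead run the index formula and the generic finiteness of the Gauss map for non-degenerate subvarieties inside Kashiwara--Schapira microlocal sheaf theory with $\K$-coefficients (equivalently, control the cohomological support loci via a Mellin-type transform). This microlocal input, specialized to simple tori, is exactly the technical heart contributed by Kr\"amer--Weissauer.
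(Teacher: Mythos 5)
Your overall strategy coincides with the paper's appendix proof (Proposition \ref{Proposition A}): compute $\chi(\cP)$ microlocally from the characteristic cycle, identify the contribution of each component with a Gauss-map degree, and use simplicity of $A$ to exclude every component except the zero section. However, two of your supporting inputs are defective as stated. First, the index step: you assert that twisting by a rank-one $\K$-local system $L$ ``amounts to choosing a spectral parameter $\xi\in V^{\ast}$'' and that $\dim_{\K}H^{0}(A,\cP\otimes_{\K}L)$ equals the weighted count of $\mathrm{pr}^{-1}(\xi)\cap\mathrm{Ch}(\cP)$. Over an arbitrary algebraically closed field --- and the paper's application (Lemma \ref{lemma: criterion for locally freeness} and Corollary \ref{cor: Albanese morphism is a Z homology fiber bundle}) genuinely needs positive characteristic --- rank-one local systems form $\mathrm{Hom}(\pi_{1}(A),\K^{\times})\cong(\K^{\times})^{2g}$ and bear no relation to points of $V^{\ast}=H^{0}(A,\Omega^{1}_{A})$; the spectral identification you invoke is a de Rham statement that only makes sense over $\C$. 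What is true and sufficient is the Dubson--Kashiwara index formula with $\K$-coefficients, $\chi(\cP)=\sum_{Z}n_{Z}d_{Z}$ (the paper quotes \cite[Theorem 3.38]{MS}), combined with the non-negativity $n_{Z}\geq 0$ for perverse sheaves and $d_{Z}\geq 0$ for the intersection numbers; with these, Theorem \ref{thm: generic vanishing BSS} is not needed at all for this particular statement.

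The more serious gap is in your geometric core. You cite ``a classical result on subvarieties of abelian varieties'' for the equivalence between generic finiteness of the Gauss map and finiteness of the translation stabilizer. But the theorem is stated for compact complex tori, which need not be algebraic, and that classical result (Griffiths--Harris, Ran) is available in the literature only for abelian varieties; extending the argument beyond the abelian case is precisely the purpose of the paper's appendix. Moreover, your stabilizer observation supplies only the trivial direction (every proper subvariety of a simple torus has finite stabilizer); the direction you actually need --- that finite stabilizer forces the conormal variety to dominate $V^{\ast}$ --- is the hard one, and nothing in your argument delivers it. The paper's substitute works for any complex torus: if the projection $p\colon\Lambda_{Z}\to\C^{g}$ is not dominant, pick $\omega\neq 0$ with $\dim p^{-1}(\omega)>0$; its image $Y\subseteq A$ is positive-dimensional and $\omega$ is conormal to $Y$ at every smooth point, so the preimage of $Y$ in the universal cover $\C^{g}$ lies in the hyperplane orthogonal to $\omega$, whence the subtorus generated by $Y$ is nonzero and proper, contradicting simplicity. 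Replacing your citation by this argument (together with Proposition \ref{proposition: intersection number is the degree of Gauss map}, which identifies $d_{Z}$ with the degree of the Gauss map) closes the gap.
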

 
If $A$ is a simple abelian variety, the above theorem follows from \cite[Proposition 10.1]{KW}, cf.\ \cite[Proposition 5.11]{LMWBobadilla} (for more details see Corollary 5.15 in the first arXiv version of \cite{LMWBobadilla}). 
The argument extends to the case of complex tori and we include some details in the appendix of this paper, see Proposition \ref{Proposition A} below.
 
\begin{corollary}\label{corollary: locally free criterion for non-simple perverse sheaf}
 Let $\cP$ be a $\K$-perverse sheaf on $A$. Suppose that $\chi(\cP)=0$ and $A$ is simple, then $\cP$ is a shift of a local system.%
\end{corollary}

\begin{proof}
First, Theorem \ref{thm: generic vanishing BSS} implies that the Euler characteristic of any perverse sheaf $\cP$ is non-negative: let $L$ be a generic rank one local system, then
\[ \chi(\cP)=\chi(\cP\otimes L)=\dim H^0(A,\cP\otimes L)\geq 0.\]
The first equality comes from the invariance of Euler characteristic under twisting by a rank one local system and the second equality uses Theorem \ref{thm: generic vanishing BSS}. Now suppose $\chi(\cP)=0$. We can write $\cP$ as a successive extension of simple perverse sheaves $\cP_{\alpha}$. Since the Euler characteristic is additive in short exact sequences and Euler characteristic for any perverse sheaf $\cP_{\alpha}$ is always non-negative, we see that $\chi(\cP_\alpha)=0$. By Theorem \ref{thm: locally free criterion for perverse sheaves}, we know that $\cP_{\alpha}$ is a shift of a local system by the same constant $\dim X$. Therefore we conclude that $\cP$ is a shift of a local system. 
\end{proof} 

\section{The proof}

In this section, we prove Theorem \ref{thm: main} and deduce its corollaries. 

First, let us study the topological implication of the existence of a nowhere vanishing real one-form. It exploits the interplay of two structures: a $\cC^{\infty}$-fiber bundle structure over $S^1$ and a morphism to a compact torus.

\begin{proposition}\label{prop: vanishing euler characteristic for direct image of Albanese}
Let $f:X\to A$ be a morphism from a compact K\"ahler manifold $X$ to a simple complex torus $A$.
Suppose that the condition \ref{condition B} of Conjecture \ref{conjecture: Kotschick stronger} holds, i.e.\ there is a closed real 1-form $\alpha$ with $[\alpha]\in f^\ast H^1(A,\R)$ on $X$ without zeros.
Then we have
\[ \chi({}^{p}R^jf_{\ast}\K_X)=0, \]
for any $j\in \Z$ and any infinite field $\K$.
\end{proposition}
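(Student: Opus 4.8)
The plan is to exhibit a single rank one $\K$-local system $N$ on $A$ which is at the same time generic in the sense of Theorem \ref{thm: generic vanishing BSS} and annihilates the cohomology of $f^{\ast}N$; comparing two computations of $H^{\ast}(X,f^{\ast}N)$ will then force each perverse summand to have vanishing Euler characteristic. Throughout write $\cP_j\colonequals{}^{p}R^{j}f_{\ast}\K_X$; these are $\K$-perverse sheaves on $A$, only finitely many nonzero, and since such an $N$ is invertible, tensoring is $t$-exact, so $\cP_j\otimes N={}^{p}\cH^{j}\bigl((Rf_{\ast}\K_X)\otimes N\bigr)$.

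First I would reduce to an honest fibration. As $\alpha$ is closed and nowhere vanishing, its class is nonzero and lies in $f^{\ast}H^1(A,\Q)\otimes\R$; approximating within this rational subspace while keeping $\alpha$ nowhere vanishing (an open condition) and applying Tischler's theorem \cite{Tischler}, I obtain a $\cC^{\infty}$-fiber bundle $g\colon X\to S^1$ with $[\alpha]=g^{\ast}\eta=f^{\ast}\beta$, where $\eta$ generates $H^1(S^1,\Z)$ and $\beta\in H^1(A,\Q)$. Choosing $m\geq1$ with $\beta_0\colonequals m\beta\in H^1(A,\Z)$ and letting $N_t$ ($t\in\K^{\ast}$) be the rank one local system on $A$ with character $\delta\mapsto t^{\langle\beta_0,\delta\rangle}$ on $H_1(A,\Z)$, one has $f^{\ast}N_t\cong g^{\ast}M_{t^m}$, where $M_s$ denotes the rank one local system on $S^1$ with monodromy $s$. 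Since $\beta\neq0$, the assignment $t\mapsto N_t$ is nonconstant.

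The first vanishing input comes from the circle fibration. For any rank one local system $L_0$ on $A$, set $N=N_t\otimes L_0$; by the projection formula along $g$,
\[ R\Gamma\bigl(X,f^{\ast}N\bigr)\cong R\Gamma\bigl(S^1,\,M_{t^m}\otimes Rg_{\ast}f^{\ast}L_0\bigr). \]
Since $g$ is a fiber bundle, the sheaves $R^{q}g_{\ast}f^{\ast}L_0$ are finite rank $\K$-local systems on $S^1$, and for a local system $\cG$ on $S^1$ the twisted cohomology $H^{\ast}(S^1,M_{s}\otimes\cG)$ vanishes as soon as $s$ is not the inverse of an eigenvalue of the monodromy of $\cG$. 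Hence $H^{\ast}(X,f^{\ast}N)=0$ for all but finitely many $t$. The second input is Theorem \ref{thm: generic vanishing BSS}: applied to the finitely many $\cP_j$, it provides a dense open $U$ in the character variety $\mathrm{Char}(A)$ of rank one local systems such that $H^i(A,\cP_j\otimes N)=0$ for all $i\neq0$ and all $j$ whenever $N\in U$.

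Finally I would combine the two. The cosets $\{N_t\otimes L_0\}_t$ cover $\mathrm{Char}(A)$ as $L_0$ varies, so I may fix $L_0$ with this coset meeting $U$; being an irreducible curve not contained in the proper closed complement of $U$, it then lies in $U$ for all but finitely many $t$. Picking such a $t$ also outside the finite bad set above and setting $N=N_t\otimes L_0$, the perverse spectral sequence
\[ E_2^{i,n}=H^i(A,\cP_n\otimes N)\Rightarrow\mathbb{H}^{i+n}\bigl(A,(Rf_{\ast}\K_X)\otimes N\bigr) \]
degenerates because $N\in U$, giving $H^{0}(A,\cP_n\otimes N)\cong\mathbb{H}^{n}\bigl(A,(Rf_{\ast}\K_X)\otimes N\bigr)\cong H^{n}(X,f^{\ast}N)=0$. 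As the higher cohomology of $\cP_n\otimes N$ already vanishes and the Euler characteristic is invariant under twisting by a rank one local system,
\[ \chi(\cP_n)=\chi(\cP_n\otimes N)=\dim_{\K}H^0(A,\cP_n\otimes N)=0 \]
for every $n$, as desired. I expect the main obstacle to be precisely this matching of two incompatible-looking notions of genericity: the local systems $N_t$ produced by $\alpha$ sweep out only a one-parameter subgroup, which could a priori sit inside the Bhatt--Schnell--Scholze non-vanishing locus, and it is the auxiliary generic translate $L_0$ — together with the fact that the translates cover the whole character variety — that rescues the argument.
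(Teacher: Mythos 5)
Your proposal is correct, and its skeleton coincides with the paper's proof: the Tischler construction produces the circle bundle $g$, the monodromy/Euler-characteristic computation on $S^1$ kills twisted cohomology, Theorem \ref{thm: generic vanishing BSS} degenerates the perverse Leray spectral sequence, and invariance of $\chi$ under rank-one twists concludes. The one place where you genuinely diverge is the step you yourself flag as the main obstacle: matching the circle-bundle vanishing with Bhatt--Schnell--Scholze genericity. The paper runs the circle argument only with constant coefficients, obtaining $H^{\ast}(X,g^{\ast}L_{\lambda})=0$ at a \emph{single} character (observing that $g^{\ast}L_{\lambda}$ is pulled back from $A$ because $g^{\ast}H^1(S^1,\R)\subseteq f^{\ast}H^1(A,\R)$), and then spreads this vanishing to a dense Zariski-open subset of the character variety by upper semicontinuity of cohomology in families, which it then intersects with the BSS-generic locus. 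You instead run the circle argument with coefficients twisted by an arbitrary $f^{\ast}L_0$ (via the projection formula along $g$), which gives vanishing on a cofinite subset of \emph{every} coset $\{N_t\otimes L_0\}_t$ of the one-parameter subgroup determined by $[\alpha]$, and then use that these cosets cover $\mathrm{Char}(A)$ and are irreducible curves to find a coset meeting the BSS open set $U$ and a point on it satisfying both conditions. Your route trades the semicontinuity input for the projection formula plus an elementary covering/irreducibility argument; the paper's route needs the circle computation only once, but must know that cohomology jumps on closed subsets of the character variety. Both routes (yours explicitly, the paper's implicitly when it intersects two generic conditions) rely on the fact that the non-vanishing locus in Theorem \ref{thm: generic vanishing BSS} is contained in a proper Zariski-closed subset of $\mathrm{Char}(A)$, so that ``generic'' may be taken to mean ``in a dense Zariski-open set''; this is indeed what \cite{BSS} provides, so your argument is complete.
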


\begin{proof}
Up to perturbing $\alpha$ slightly, we may assume that $[\alpha]\in f^\ast H^1(A,\Q)$.
Multiplying by a suitbale integer, we thus reduce to the case where $[\alpha]\in f^\ast H^1(A,\Z)$.
Integration over $\alpha$ then yields as in \cite{Tischler} a submersive $\mathcal C^\infty$-map $g:X\to S^1$ to the circle with $g^\ast d\theta=\alpha$, where $\theta$ denotes the angular coordinate on $S^1$.
In particular, $g$ is a $\mathcal C^\infty$-fiber bundle with $g^\ast H^1(S^1,\R)\subseteq f^\ast H^1(A,\R)$.

Let $\K$ be any infinite field. We first mimic the proof of \cite[Theorem A.1]{HS} to produce a $\K$-local system on $X$ with no cohomology. Let $L_{\lambda}$ be a generic $\K$-local system on $S^1$ with monodromy given by $\lambda \in \K$. Set
\[ L=g^{\ast}L_{\lambda}.\]
Consider the Leray spectral sequence with $E_2$-term
\[ E^{p,q}_2=H^p(S^1,R^qg_{\ast}g^{\ast}L_{\lambda})=H^p(S^1,L_{\lambda}\otimes_{\K} R^qg_{\ast}\K_X)\Longrightarrow H^{p+q}(X,L).\]
Since $g$ is a $\cC^{\infty}$-fiber bundle, the sheaf $R^qg_{\ast}\K_X$ is a local system on $S^1$ with the stalk $V^q$ being a finite dimensional $\K$-vector space. Since $\K$ is infinite and so we can choose an element $\lambda\in \K$ such that $\lambda^{-1}$ is different from any of the eigenvalues of the natural monodromy operator on $V^q$ for all $q$. Therefore we conclude that
\[ H^0(S^1,L_{\lambda}\otimes_{\K} R^qg_{\ast}\K_X)=0, \quad \textrm{for all }q.\]
Since the Euler characteristic of any local system of finite rank on $S^1$ is zero, we also know that
\[ H^1(S^1,L_{\lambda}\otimes_{\K} R^qg_{\ast}\K_X)=0, \quad \textrm{for all }q.\]
Therefore, $E^{p,q}_2=0$ for all $p,q$ and we obtain $H^k(X,L)=0$ for all $k$.

Since $g^{\ast}H^1(S^1,\R) \subseteq f^{\ast}H^1(A,\R)$, 
the local system $L$ above is isomorphic to the pullback of some $\mathbb K$-local system on $A$.
By the semicontinuity of cohomology in families, we deduce that a generic  $\mathbb K$-local system $L_A$ of rank 1 on $A$ satisfies:
\begin{align}\label{eq:coho=0}
    H^i(X,f^\ast L_A)=0\ \ \ \ \text{for all $i$}.
\end{align}

We now set $L_X:=f^\ast L_A$ and consider the perverse Leray spectral sequence with $E_2$-term 
\begin{align*}
    E^{j,\ell}_2&=H^j(\Alb(X),{}^{p}R^{\ell}f_{\ast}L_X)=H^j(\Alb(X),{}^{p}R^{\ell}f_{\ast}(f^{\ast}L_A))\\
                &=H^j(\Alb(X),{}^pR^{\ell}f_{\ast}\K_{X}\otimes_{\K} L_A) \Longrightarrow H^{j+\ell}(X,L_X).
\end{align*}
Theorem \ref{thm: generic vanishing BSS} implies that $E^{j,\ell}_2=0$ for $j>0$, and so the spectral sequence degenerates at $E_2$-page. 
On the other hand, $H^{j+\ell}(X,L_X)=0$ for all $j,\ell$ by (\ref{eq:coho=0}) and so 
$$
 E^{j,\ell}_2=H^j(\Alb(X),{}^pR^{\ell}f_{\ast}\K_{X}\otimes_{\K} L_A)=0
$$
for all $j,\ell$.
Hence, 
\begin{align*}
    \chi(\Alb(X),{}^pR^{\ell}f_{\ast}\K_{X}\otimes L_A) =0, \quad \textrm{for all $\ell$}.
\end{align*} 
Using the invariance of Euler characteristic under twisting by a rank one local system, we conclude that 
\[ \chi(\Alb(X),{}^pR^jf_{\ast}\K_{X})=\chi(\Alb(X),{}^pR^jf_{\ast}\K_{X}\otimes_{\K} L_A)=0,\]
for all $j$ and any infinite field $\K$.\qedhere
\end{proof}

Before deriving a consequence of Proposition \ref{prop: vanishing euler characteristic for direct image of Albanese}, we recall 
\begin{definition}{\cite[Definition 4.1]{LMWBobadilla}}\label{definition: locally constant constructible complex}
Let $R$ be any commutative ring.   An $R$-constructible complex $\cF$ is \emph{locally constant} if the cohomology sheaves $\cH^j(\cF)$ are local systems for all $j$. \end{definition}

The following lemma is a version of \cite[Lemma 5.9]{LMWBobadilla}.
\begin{lemma}\label{lemma: criterion for locally freeness}
Let $\cF$ be a $\Z$-constructible complex on a complex manifold. If $\cF\otimes_{\Z}^L \K$ is locally constant for any infinite field $\K$, then $\cF$ is locally constant.
\end{lemma}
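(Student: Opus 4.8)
The plan is to reduce local constancy of $\cF$ to a pointwise, purely algebraic statement about complexes of abelian groups, where the passage from all fields $\K$ back to $\Z$ can be controlled.

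First I would recall the standard microlocal characterization of local constancy for constructible complexes on a complex manifold $M$: a $\Z$-constructible complex $\cG$ is locally constant if and only if, for every point $x\in M$, every sufficiently small ball $B$ around $x$, and every $y\in B$, the specialization map
\[ \mathrm{sp}_{x,y}\colon \cG_x \cong R\Gamma(B,\cG)\longrightarrow R\Gamma(B_y,\cG)\cong \cG_y, \]
obtained by restricting to a small ball $B_y\subset B$ around $y$, is an isomorphism in the derived category. For a constant complex these maps are tautologically isomorphisms, and conversely their being isomorphisms says exactly that the stalks of the $\cH^j(\cG)$ do not jump across the strata of a stratification adapted to $\cG$, which is precisely local constancy.

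Next comes the crucial algebraic input: a morphism $\phi\colon C\to D$ of bounded complexes of finitely generated abelian groups is a quasi-isomorphism if and only if $\phi\otimes_{\Z}^L\K$ is a quasi-isomorphism for every algebraically closed field $\K$. Setting $E=\mathrm{Cone}(\phi)$, which is again bounded with finitely generated cohomology, this amounts to the assertion that $E\otimes_{\Z}^L\K$ acyclic for all such $\K$ forces $E$ acyclic. Indeed, tensoring with $\Q$ (flat over $\Z$) gives $\cH^j(E)\otimes_{\Z}\Q=0$, so each $\cH^j(E)$ is torsion; tensoring with $\overline{\mathbb{F}}_p$ and using the universal coefficient sequence gives $\cH^j(E)\otimes_{\Z}\overline{\mathbb{F}}_p=0$, hence $\cH^j(E)/p=0$, for every prime $p$; a finitely generated group that is torsion and $p$-divisible for all $p$ vanishes. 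This is the only point where one genuinely needs the full family of residue characteristics rather than just $\K=\C$.

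Finally I would assemble the two ingredients. Since $\cF$ is $\Z$-constructible, each $\mathrm{sp}_{x,y}$ is a morphism of bounded complexes of finitely generated abelian groups. The formation of stalks and of $R\Gamma$ over small balls, and hence of the specialization maps, is compatible with $-\otimes_{\Z}^L\K$, so $\mathrm{sp}_{x,y}\otimes_{\Z}^L\K$ is the specialization map of $\cF\otimes_{\Z}^L\K$. By hypothesis $\cF\otimes_{\Z}^L\K$ is locally constant, so by the first step these maps are isomorphisms for every algebraically closed $\K$; the algebraic lemma then forces each $\mathrm{sp}_{x,y}$ to be an isomorphism, and therefore $\cF$ is locally constant. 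The main obstacle I anticipate lies not in the elementary algebra but in making the specialization criterion precise and citable, and in verifying at the level of the derived category that specialization maps base change compatibly along $-\otimes_{\Z}^L\K$; once these geometric compatibilities are in place, the comparison over all characteristics closes the argument.
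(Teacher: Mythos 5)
Your proposal is correct and takes essentially the same route as the paper, whose proof simply outsources both steps to \cite{LMWBobadilla}: there, too, local constancy is reduced to showing that stalk-comparison (specialization) maps are quasi-isomorphisms, and a morphism of bounded complexes of finitely generated $\Z$-modules is shown to be a quasi-isomorphism once it becomes one after $-\otimes_{\Z}^{L}\K$ for algebraically closed fields $\K$ of every characteristic --- exactly your $\Q$/$\overline{\mathbb{F}}_p$ divisibility argument (with the harmless caveat that acyclicity over $\Q$ is deduced from any characteristic-zero $\K$ by faithful flatness, since $\Q$ itself is not algebraically closed). The only difference is that you spell out the details that the paper handles by citation.
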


\begin{proof}
    The proof of \cite[Lemma 5.9]{LMWBobadilla} is reduced to \cite[Lemma 5.8]{LMWBobadilla}. But to show a morphism of bounded complexes of free $\Z$-modules being quasi-isomorphic, we just need to check it still holds after tensoring 
    with any infinite field (in fact one field per characteristic is enough). 
\end{proof}

\begin{corollary}\label{cor: Albanese morphism is a Z homology fiber bundle}
With the same assumption as in Proposition \ref{prop: vanishing euler characteristic for direct image of Albanese}, assume in addition that $A$ is simple. Then $f:X\to A$ is a $\Z$-homology fiber bundle. 
\end{corollary}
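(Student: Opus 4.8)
The plan is to deduce from Proposition \ref{prop: vanishing euler characteristic for direct image of Albanese} that the complex $Rf_*\Z_X$ is locally constant in the sense of Definition \ref{definition: locally constant constructible complex}, and then to translate this into the homology fiber bundle condition. By Lemma \ref{lemma: criterion for locally freeness} it suffices to prove that $Rf_*\K_X\cong Rf_*\Z_X\otimes^L_\Z\K$ is locally constant for every algebraically closed field $\K$, the identification being the projection formula (valid since $f$ is proper). So fix such a $\K$ and work on $A$.

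First I would analyse a single perverse cohomology sheaf $\cP:={}^pR^jf_*\K_X$. The key point is that on a torus every $\K$-perverse sheaf has non-negative Euler characteristic: by the generic vanishing Theorem \ref{thm: generic vanishing BSS}, for a generic rank one local system $L$ one has $H^i(A,\cP\otimes_\K L)=0$ for $i\neq 0$, and since twisting by a rank one local system preserves the Euler characteristic, $\chi(\cP)=\chi(\cP\otimes_\K L)=\dim_\K H^0(A,\cP\otimes_\K L)\geq 0$. Now take a Jordan--Hölder filtration of $\cP$ in the abelian category of perverse sheaves. Each simple subquotient $\cP_k$ satisfies $\chi(\cP_k)\geq 0$ by the above, while $\sum_k\chi(\cP_k)=\chi(\cP)=0$ by Proposition \ref{prop: vanishing euler characteristic for direct image of Albanese}. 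Hence $\chi(\cP_k)=0$ for every $k$, and since $A$ is simple, Theorem \ref{thm: locally free criterion for perverse sheaves} shows each $\cP_k$ is a shift of a local system, necessarily of the form $L_k[\dim A]$. The class of such shifted local systems is closed under extensions of perverse sheaves: the long exact sequence of cohomology sheaves associated to an extension shows it is again concentrated in degree $-\dim A$ with a local system there (an extension of local systems over a field is a local system). Running this along the filtration, each ${}^pR^jf_*\K_X$ is a shifted local system; reconstructing $Rf_*\K_X$ from its perverse cohomology sheaves via the perverse truncation triangles, and using that the cone of a morphism between complexes with local system cohomology sheaves again has local system cohomology sheaves, I conclude that $Rf_*\K_X$ is locally constant. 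Lemma \ref{lemma: criterion for locally freeness} then gives that $Rf_*\Z_X$ is locally constant.

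It remains to pass from local constancy of $Rf_*\Z_X$ to the homology fiber bundle condition. I would cover $A$ by contractible open sets $U_i$ (small balls), on which the locally constant complex $Rf_*\Z_X$ becomes constant. By proper base change its stalk at $y$ is $R\Gamma(f^{-1}(y),\Z)$, so for $y\in U_i$ the restriction $R\Gamma(f^{-1}(U_i),\Z)=R\Gamma(U_i,Rf_*\Z_X)\to R\Gamma(f^{-1}(y),\Z)$ induced by the inclusion $f^{-1}(y)\hookrightarrow f^{-1}(U_i)$ is an isomorphism; the same argument over every field $\K$ gives the analogous isomorphism with all field coefficients. Since these cohomology groups are finitely generated (the fiber $f^{-1}(y)$ being compact), universal coefficients promotes this to an isomorphism on integral homology $H_\ast(f^{-1}(y),\Z)\to H_\ast(f^{-1}(U_i),\Z)$, which is exactly the defining condition for $f$ to be a $\Z$-homology fiber bundle.

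I expect the main obstacle to be the perverse-sheaf step: forcing every Jordan--Hölder factor of each ${}^pR^jf_*\K_X$ to have vanishing Euler characteristic. This is where simplicity of $A$ and the interplay of the generic vanishing theorem (yielding $\chi\geq 0$) with the global vanishing $\chi({}^pR^jf_*\K_X)=0$ is essential, and where the classification Theorem \ref{thm: locally free criterion for perverse sheaves} of Kr\"amer--Weissauer is brought to bear. The remaining passages --- extension-closedness, assembling the perverse filtration, descent to $\Z$-coefficients via Lemma \ref{lemma: criterion for locally freeness}, and the cohomology-to-homology translation --- are comparatively routine.
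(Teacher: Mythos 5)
Your proposal is correct and follows essentially the same route as the paper's proof: reduce via Lemma \ref{lemma: criterion for locally freeness} to showing that $Rf_{\ast}\K_X$ is locally constant for every algebraically closed field $\K$, deduce this from the vanishing $\chi({}^pR^jf_{\ast}\K_X)=0$ of Proposition \ref{prop: vanishing euler characteristic for direct image of Albanese} together with Theorem \ref{thm: locally free criterion for perverse sheaves} (using simplicity of $A$), and then translate local constancy of $Rf_{\ast}\Z_X$ into the $\Z$-homology fiber bundle condition. The only difference is that you prove in-line what the paper delegates to citations or asserts directly --- namely the Jordan--H\"older reduction (with $\chi\geq 0$ from Theorem \ref{thm: generic vanishing BSS}) needed to apply Theorem \ref{thm: locally free criterion for perverse sheaves}, which is stated only for \emph{simple} perverse sheaves, to the possibly non-simple ${}^pR^jf_{\ast}\K_X$; the assembly step the paper quotes as \cite[Proposition 4.3]{LMWBobadilla}; and the cohomology-to-homology comparison --- all of which are correct and indeed necessary supplements to the argument as written.
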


\begin{proof}
Note first that $f$ is a $\Z$-homology fiber bundle if and only if  
\[ \cH^j(Rf_{\ast}\Z_X)=R^jf_{\ast}\Z_{X}\]
are local systems for all $j$, i.e. the $\Z$-constructible complex $Rf_{\ast}\Z_{X}$ is locally constant.  
By Lemma \ref{lemma: criterion for locally freeness}, it suffices to show that for any infinite field $\K$, the $\K$-constructible sheaf 
\[Rf_{\ast}\Z_{X}\otimes_{\Z}^{L}\K=Rf_{\ast}\K_{X}\]
is locally constant. Then \cite[Proposition 4.3]{LMWBobadilla} says that we only need to check that the perverse cohomology sheaf
\[ {}^pR^jf_{\ast}\K_{X} \textrm{ is a shift of a local system for each $j$ and any infinite $\K$}.\]
Since $A$ is simple, by Corollary \ref{corollary: locally free criterion for non-simple perverse sheaf}, it suffices to show that
\[ \chi({}^pR^jf_{\ast}\K_{X})=0, \textrm{ for each $j$ and any infinite $\K$},\]
which follows from Proposition \ref{prop: vanishing euler characteristic for direct image of Albanese}. Therefore we conclude that $f:X\to A$ is a $\Z$-homology fiber bundle. 
\end{proof} 

\begin{proof}[Proof of Theroem \ref{thm: main}]
It suffices to prove \ref{condition B} $\Longrightarrow$ \ref{condition A}. Starting from \ref{condition B},  using Corollary \ref{cor: Albanese morphism is a Z homology fiber bundle} and the Bobadilla-Koll\'ar conjecture \ref{conjecture: BK} for the morphism $f:X\to A$, we deduce that $f:X \to A$ is smooth. Therefore any pullback of holomorphic 1-form has no zeros on $X$ and thus \ref{condition B} $\Longrightarrow$ \ref{condition A}.
\end{proof}

\begin{proof}[Proof of Corollary \ref{corollary: strong Kotschick conjecture}]
Let $f:X\to A$ be a proper map, where $A$ is a simple complex torus with $\dim A\geq \dim X-1$.
Assume that condition \ref{condition B} in Conjecture \ref{conjecture: Kotschick stronger} holds.
By Corollary \ref{cor: Albanese morphism is a Z homology fiber bundle}, $f$ is a $\Z$-homology fibration and in particular surjective.
The Bobadilla--Koll\'ar conjecture is clearly true for proper maps of relative dimension zero and it holds for proper maps of relative dimension one by \cite[Proposition 10]{Bobadilla}.
This concludes the argument because
  $\dim A\geq \dim X-1$.
\end{proof}

\begin{proof}[Proof of Corollary \ref{corollary: Kotschick conjecture large betti number}]
This is a direct consequence of Corollary \ref{corollary: strong Kotschick conjecture}.
\end{proof}

\appendix   

\section{Degenerate perverse sheaves on complex tori}

In this appendix, we provide a proof of Theorem \ref{thm: locally free criterion for perverse sheaves}.  

First, we adapt \cite[\S 2]{FK} to the analytic setting, where one studys the generic degree of a meromorphic map to a Grassmannian (e.g. the Gauss map). Let $M$ be a connected $k$-dimensional complex manifold. Let $V$ be an $n$-dimensional complex vector space, and let 
\[f : M \dashrightarrow G(k,V) \]
be a meromorphic map to the Grassmannian of $k$-dimensional linear subspaces of $V$. 
Up to replacing $M$ by a dense Zariski open subset, we can assume that $f$ is regular. 
Consider the flag variety
$F(k, n-1, V)$ and its projections
\[ \begin{tikzcd}
    {} &F (k, n-1, V )\arrow[dl,"p"] \arrow[dr,"q"]& {}\\
    G(k, V) & & G(n-1, V ) .
\end{tikzcd}
\]
Consider the fiber product diagram
\[ \begin{tikzcd}
\tilde{M}\colonequals M\times_{G(k, V)}F(k, n-1, V ) \arrow[r] \arrow[d] & F(k,n-1,V)\arrow[d,"p"]\\
M \arrow[r,"f"] & G(k,V)
\end{tikzcd}
\]
Since $p$ is a smooth map with $(n-k-1)$-dimensional fibers, $\tilde{M}$ is a connected $(n-1)$-dimensional complex manifold. Set-theoretically, we have
\[ \tilde{M}=\{ (x,V_0,W)\in M\times F(k,n-1,V) \mid f(x)=V_0, V_0\subseteq W \}.\]
The map $q$ induces a holomorphic map between complex manifolds of the same dimension
\begin{align*}
    q': &\tilde{M} \to F(k,n-1,V) \xrightarrow{q} G(n-1,V),\\
       &(x,V_0,W) \mapsto (V_0,W) \mapsto W.
\end{align*} 
Note that we have $f(x)\in G(k,W)$.
\begin{definition} 
We define $\deg f$ to be the degree of the map $q'$.
\end{definition}

With the set-up above, it is direct to deduce the following
\begin{proposition}\label{proposition: degree of map to Grassmannian}
If $W \subseteq V$ is a generic hyperplane, then $\deg f$ is equal to the number of $x \in M$ such that $f(x) \in G(k, W) \subseteq G(k, V)$. 
If $\deg f>0$, then for any such $x$, the map $f$ is locally an embedding near $x$ and the intersection $f(M) \cap G(k, W )$ is transversal at $x$.
\end{proposition}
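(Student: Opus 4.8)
The plan is to interpret $\deg f$ as the generic cardinality of the fibers of $q'$, to identify those fibers geometrically, and then to translate the statement that $q'$ is a local biholomorphism at a generic fiber point into the two transversality assertions by a single tangent-space computation.

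First I would prove the counting statement. Fix $W\in G(n-1,V)$ and observe that a point of $(q')^{-1}(W)$ is a triple $(x,V_0,W)\in\tilde M$ with $f(x)=V_0\subseteq W$; since $V_0=f(x)$ is determined by $x$, the map $(x,V_0,W)\mapsto x$ gives a bijection
\[ (q')^{-1}(W)\ \xrightarrow{\ \sim\ }\ \{\,x\in M : f(x)\in G(k,W)\,\}. \]
By definition $\deg f=\deg q'$ is the number of points in a generic fiber of $q'$, so for generic $W$ this number equals the cardinality on the right, which is the first assertion. If $q'$ is not dominant then both sides vanish for generic $W$ and $\deg f=0$, so there is nothing further to prove.

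Now assume $\deg f>0$, so that $q'$ is dominant between connected complex manifolds of the same dimension $n-1$. The locus $\Sigma\subseteq\tilde M$ where $dq'$ fails to be an isomorphism is the zero set of the Jacobian, hence an analytic subset, and it is proper precisely because $q'$ is dominant; by Sard's theorem $q'(\Sigma)$ then has measure zero in $G(n-1,V)$. Shrinking the generic set of $W$ from the first paragraph, I may assume in addition that $W\notin q'(\Sigma)$, so that $dq'_z$ is an isomorphism at every point $z=(x,V_0,W)$ of the fiber. It remains to read off the two conclusions from this. For this I would use the standard description
\[ T_{(V_0,W)}F(k,n-1,V)=\bigl\{(\phi,\psi)\in \mathrm{Hom}(V_0,V/V_0)\oplus \mathrm{Hom}(W,V/W)\ :\ \overline{\phi}=\psi|_{V_0}\bigr\}, \]
where $\overline{\phi}\colon V_0\to V/W$ is $\phi$ composed with $V/V_0\twoheadrightarrow V/W$ (which exists since $V_0\subseteq W$) and $\psi|_{V_0}\colon V_0\hookrightarrow W\to V/W$. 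Since $\tilde M$ is the fiber product of $f$ and the submersion $p$, its tangent space at $z$ is
\[ T_z\tilde M=\bigl\{(\xi,\psi)\in T_xM\oplus \mathrm{Hom}(W,V/W)\ :\ \overline{df_x(\xi)}=\psi|_{V_0}\bigr\},\qquad dq'_z(\xi,\psi)=\psi, \]
whence, using that $T_{V_0}G(k,W)=\mathrm{Hom}(V_0,W/V_0)$ is exactly the kernel of $\mathrm{Hom}(V_0,V/V_0)\to\mathrm{Hom}(V_0,V/W)$,
\[ \ker dq'_z=\{\xi\in T_xM : \overline{df_x(\xi)}=0\}=\{\xi\in T_xM : df_x(\xi)\in T_{V_0}G(k,W)\}. \]

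Finally I would conclude by linear algebra. Since $\dim T_z\tilde M=\dim T_WG(n-1,V)=n-1$, the isomorphism $dq'_z$ is injective, so $\ker dq'_z=0$. This forces $df_x$ to be injective, so $f$ is a holomorphic immersion at $x$ and hence a local embedding. Moreover $df_x(T_xM)\cap T_{V_0}G(k,W)=0$, and the dimension count
\[ \dim df_x(T_xM)+\dim T_{V_0}G(k,W)=k+k(n-1-k)=k(n-k)=\dim T_{V_0}G(k,V) \]
shows $df_x(T_xM)+T_{V_0}G(k,W)=T_{V_0}G(k,V)$, i.e. $f(M)$ and $G(k,W)$ meet transversally at $f(x)$. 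The only genuinely technical points are the correct identification of $T_z\tilde M$ and of $dq'_z$ (after which everything is linear algebra) and the appeal to Sard's theorem to pass from ``$dq'$ is generically an isomorphism'' to ``$dq'$ is an isomorphism along an entire generic fiber'' in the analytic category; the latter passage is where I would be most careful.
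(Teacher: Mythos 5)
Your proof is correct and is essentially the paper's (omitted) argument: the paper merely asserts that the proposition is ``direct to deduce'' from the set-up, and your identification of $(q')^{-1}(W)$ with $\{x\in M : f(x)\in G(k,W)\}$, together with the flag-variety/fiber-product tangent space computation showing that invertibility of $dq'_z$ forces $df_x$ to be injective and $df_x(T_xM)\oplus T_{V_0}G(k,W)=T_{V_0}G(k,V)$, is exactly the intended unwinding. The genericity caveats you flag (Sard's theorem in the analytic category, and reading $\deg q'$ as generic fiber cardinality, which is what the paper's definition intends) are the right points of care and do not affect correctness.
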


Now we apply the discussion above to the Gauss map associated to a complex torus. We find it is easier to work in a more general setting. Let $G$ be a complex Lie group, let $\mathfrak{g}$ be its Lie algebra. If $Z \subseteq  G$ is an irreducible $k$-dimensional closed analytic subset, we have the meromorphic map (called Gauss map) 
\[ \Gamma_Z : Z \dashrightarrow G(k, \mathfrak{g}) \]
defined as follows. For $x \in G$, let
\[ \ell_x : G \to G, \quad  \ell_x (y) = xy \]
be the left multiplication by $x$. Then, for a smooth point $z \in Z$, we have $\Gamma_Z(z)=z^{-1}(T_zZ)$, which is the image of the differential map $\ell_{z^{-1}}$
\[ d_z\ell_{z^{-1}} : T_zZ \to T_eG = \mathfrak{g}.\]
Let $\Lambda_Z\subseteq T^{\ast}G$ denote the conic Lagrangian variety associated to $Z$, which is the closure in $T^{\ast}G$ of the conormal bundle in $G$ to the smooth locus of $Z$. For $\gamma \in \mathfrak{g}^{\ast}$, let $\Omega_{\gamma} \subseteq T^{\ast}G$ be the graph of the left invariant $1$-form $\omega_{\gamma}$ on $G$ associated to $\gamma$. 

\begin{proposition}\label{proposition: intersection number is the degree of Gauss map}
Let $\gamma \in \mathfrak{g}^{\ast}$ be a generic linear functional. Then $\Lambda_Z\cap \Omega_{\gamma}$ consists of finitely many points that are smooth on $\Lambda_Z$ and in which the intersection is transverse. Moreover,
\[ \deg \Gamma_Z=\# \abs{\Lambda_Z\cap \Omega_{\gamma}}.\]
\end{proposition}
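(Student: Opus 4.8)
The plan is to reduce everything to Proposition \ref{proposition: degree of map to Grassmannian} via the left-trivialization of the cotangent bundle. First I would use left translation to identify $T^{\ast}G\cong G\times\fg^{\ast}$, under which the graph $\Omega_\gamma$ of the left-invariant form $\omega_\gamma$ becomes the constant section $G\times\{\gamma\}$, while the conormal bundle to the smooth locus becomes the annihilator bundle
\[ \Lambda_Z^{\circ}=\{(z,\eta)\ :\ z\in Z_{\mathrm{sm}},\ \eta\in\Gamma_Z(z)^{\perp}\subseteq\fg^{\ast}\}, \]
with $\Lambda_Z=\overline{\Lambda_Z^{\circ}}$. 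Writing $\pi\colon\Lambda_Z\to\fg^{\ast}$ for the second projection, the intersection $\Lambda_Z\cap\Omega_\gamma$ is then exactly the fiber $\pi^{-1}(\gamma)$. The crucial elementary observation is that $\eta=\gamma$ lies in $\Gamma_Z(z)^{\perp}$ precisely when $\Gamma_Z(z)\subseteq\ker\gamma=:W$, i.e. when $\Gamma_Z(z)\in G(k,W)$; and $\gamma\mapsto\ker\gamma$ identifies a generic $\gamma$ with a generic hyperplane $W\in G(n-1,\fg)$.

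Next I would dispose of the boundary and count the points. The boundary $\partial\Lambda_Z=\Lambda_Z\setminus\Lambda_Z^{\circ}$ is a proper closed analytic subset of the $n$-dimensional variety $\Lambda_Z$, hence of dimension $\leq n-1$, so its image $\pi(\partial\Lambda_Z)\subseteq\fg^{\ast}$ misses a generic $\gamma$; thus for generic $\gamma$ every point of $\Lambda_Z\cap\Omega_\gamma$ lies over $Z_{\mathrm{sm}}$ and corresponds to a point $z\in Z_{\mathrm{sm}}$ with $\Gamma_Z(z)\in G(k,W)$. Applying Proposition \ref{proposition: degree of map to Grassmannian} with $M=Z_{\mathrm{sm}}$ (shrunk so that $\Gamma_Z$ is regular), $V=\fg$ and $f=\Gamma_Z$, the set of such $z$ is finite of cardinality $\deg\Gamma_Z$ (empty when $\deg\Gamma_Z=0$), giving both finiteness and the equality $\#\abs{\Lambda_Z\cap\Omega_\gamma}=\deg\Gamma_Z$. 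Since these points all lie in $\Lambda_Z^{\circ}$, they are automatically smooth points of $\Lambda_Z$.

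The main work, and the step I expect to be the genuine obstacle, is matching the two transversality statements. Because $\Omega_\gamma=G\times\{\gamma\}$ is a constant section, the intersection $\Lambda_Z\cap\Omega_\gamma$ is transverse at a smooth point $(z,\gamma)$ if and only if the differential $d\pi\colon T_{(z,\gamma)}\Lambda_Z\to\fg^{\ast}$ is an isomorphism (both spaces have dimension $n$). I would compute this by differentiating the defining relation $\langle\eta,v\rangle=0$, $v\in\Gamma_Z(z)$, along $\Lambda_Z^{\circ}$: a tangent vector $(\dot z,\dot\eta)$ with $\dot\eta=0$ forces the composite $\bar\gamma\circ d\Gamma_Z(\dot z)$ to vanish on $\Gamma_Z(z)$, where $\bar\gamma$ is the functional induced by $\gamma$ on $\fg/\Gamma_Z(z)$. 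Under the identification $T_{\Gamma_Z(z)}G(k,\fg)=\mathrm{Hom}(\Gamma_Z(z),\fg/\Gamma_Z(z))$ this says exactly that $d\Gamma_Z(\dot z)$ lands in the subspace $\mathrm{Hom}(\Gamma_Z(z),W/\Gamma_Z(z))=T_{\Gamma_Z(z)}G(k,W)$. Now Proposition \ref{proposition: degree of map to Grassmannian} provides, at each such point, that $\Gamma_Z$ is a local embedding and that $\Gamma_Z(M)$ meets $G(k,W)$ transversally; since $\mathrm{im}(d\Gamma_Z)$ is $k$-dimensional and $T G(k,W)$ has codimension $k$, transversality forces these two to intersect only in $0$, so $d\Gamma_Z(\dot z)=0$ and hence $\dot z=0$. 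Therefore $d\pi$ is injective, hence an isomorphism, proving the transversality. The delicate points will be the careful identification of tangent spaces under the left-trivialization and checking that the ``generic'' loci for boundary-avoidance and for Proposition \ref{proposition: degree of map to Grassmannian} intersect in a generic subset of $\fg^{\ast}$.
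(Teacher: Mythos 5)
Your proposal is correct and takes essentially the same route as the paper: the paper's proof is exactly the application of Proposition \ref{proposition: degree of map to Grassmannian} with $M=Z$, $V=\fg$, $f=\Gamma_Z$, viewing $\ker\gamma$ as a generic hyperplane and noting that $\Gamma_Z(z)\subseteq\ker\gamma$ if and only if $(z,\omega_\gamma(z))\in\Lambda_Z\cap\Omega_\gamma$. The paper leaves the left-trivialization, the boundary-avoidance dimension count, and the matching of the two transversality statements as immediate, whereas you spell them out; these details are accurate and consistent with what the paper implicitly uses.
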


\begin{proof}
    Apply Proposition \ref{proposition: degree of map to Grassmannian} where $M=Z, V=\fg, f=\Gamma_Z$. We can view $\gamma\in \fg^{\ast}$ as a generic hyperplane in $\fg$ and $\Gamma_Z(z)\subseteq \gamma\subseteq \fg$ if and only if $(z,\omega_{\gamma}(z))\in \Lambda_Z \cap \Omega_{\gamma}$.
\end{proof}

Finally, we can prove Theorem \ref{thm: locally free criterion for perverse sheaves}.
\begin{proposition}\label{Proposition A}
Let $A$ be a complex torus, which is simple as a torus. Let $\K$ be any field and let $\cP$ be a simple $\K$-perverse sheaf on $A$. If the Euler characteristic of $\cP$ vanishes, i.e.
    \[ \chi(\cP)=\sum_i (-1)^i \dim_{\K} H^i(A,\cP)=0,\]
    then $\cP$ is a shift of a local system.
\end{proposition}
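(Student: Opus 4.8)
The plan is to compute $\chi(\cP)$ microlocally through its characteristic cycle, to evaluate that cycle against a generic invariant graph $\Omega_\gamma$ using the Gauss-map machinery developed above, and then to exploit the simplicity of $A$ to force the cycle to be a multiple of the zero section. First I would write the characteristic cycle as
\[
CC(\cP)=\sum_i m_i\,[\Lambda_{Z_i}],
\]
where the $Z_i\subseteq A$ are irreducible closed analytic subsets, $\Lambda_{Z_i}$ is the associated conic Lagrangian (the conormal variety), and the multiplicities satisfy $m_i\geq 0$ because $\cP$ is perverse (non-negativity of the coefficients of the characteristic cycle of a perverse sheaf, valid for $\K$-coefficients in the analytic setting). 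The support $Z=\supp(\cP)$ occurs with strictly positive multiplicity equal to the generic rank of $\cP$ along $Z$. The microlocal index formula of Dubson and Kashiwara, in the form used by Franecki--Kapranov \cite{FK}, then gives for a generic left-invariant one-form $\omega_\gamma$ with $\gamma\in\fg^\ast$
\[
\chi(\cP)=\pm\sum_i m_i\,(\Lambda_{Z_i}\cdot\Omega_\gamma),
\]
where the intersection number with $\Omega_\gamma$ computes $\chi$ because $\Omega_{t\gamma}$ deforms $\Omega_\gamma$ to the zero section and $A$ is compact.

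Next I would feed in Proposition \ref{proposition: intersection number is the degree of Gauss map}: for generic $\gamma$ the intersection $\Lambda_{Z_i}\cap\Omega_\gamma$ is transverse and finite, so by complex orientation each local intersection number is $+1$ and
\[
\Lambda_{Z_i}\cdot\Omega_\gamma=\#\abs{\Lambda_{Z_i}\cap\Omega_\gamma}=\deg\Gamma_{Z_i}\geq 0.
\]
Thus $\chi(\cP)$ is, up to a global sign, a sum of non-negative terms $m_i\deg\Gamma_{Z_i}$. The hypothesis $\chi(\cP)=0$ therefore forces $m_i\deg\Gamma_{Z_i}=0$ for every $i$, and since every occurring multiplicity is positive, I conclude $\deg\Gamma_{Z_i}=0$ for all $i$.

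Then simplicity enters. A point $Z_i$ has $\deg\Gamma_{Z_i}=1$, so no $Z_i$ is a point (consistent with the fact that a skyscraper has nonzero Euler characteristic). For $\dim Z_i>0$, the vanishing $\deg\Gamma_{Z_i}=0$ means exactly that the Gauss map $\Gamma_{Z_i}$ is \emph{degenerate}, i.e.\ its image has dimension $<\dim Z_i$; by the rigidity of Gauss maps of subvarieties of complex tori (cf.\ \cite{FK,KW}) this forces $Z_i$ to be invariant under translation by a positive-dimensional subtorus $B\subseteq A$, so that $Z_i+B=Z_i$. Since $A$ is simple, $B=A$ and hence $Z_i=A$. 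Applying this to $Z=\supp(\cP)$ shows $\supp(\cP)=A$, and applying it to every other summand shows that no proper $Z_i\subsetneq A$ can occur. Hence $CC(\cP)$ is a positive multiple of $[\Lambda_A]$, the zero section of $T^\ast A$.

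Finally I would conclude: since the characteristic variety of $\cP$ is the zero section, the cohomology sheaves of $\cP$ are local systems, and since $\cP$ is perverse on the smooth variety $A$ it is concentrated in a single degree, so $\cP$ is a local system placed in degree $-\dim A$, that is, a shift of a local system. I expect the main obstacle to be the rigidity input --- that a positive-dimensional subvariety of a torus with vanishing Gauss degree is stabilized by a nontrivial subtorus --- since this is the geometric heart where the group structure and the simplicity of $A$ interact; a secondary point requiring care is ensuring that both the index formula and the non-negativity of the characteristic-cycle coefficients remain valid over an arbitrary field $\K$ rather than only over $\C$.
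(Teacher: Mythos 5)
Your outline matches the paper's proof in its overall architecture: both write $CC(\cP)=\sum_i m_i[\Lambda_{Z_i}]$ with $m_i\geq 0$ over an arbitrary field $\K$ in the analytic setting (via \cite{MS}), both apply the Dubson--Kashiwara microlocal index formula, both identify each intersection number $\Lambda_{Z_i}\cdot\Omega_\gamma$ with $\deg\Gamma_{Z_i}$ through Proposition \ref{proposition: intersection number is the degree of Gauss map}, and both conclude from $CC(\cP)=m[\Lambda_A]$ that $\cP$ is a shifted local system. The problem is the step you yourself flag as the ``main obstacle'': you invoke, as a citable black box, a rigidity theorem asserting that a positive-dimensional subvariety with vanishing Gauss degree is stabilized by a nontrivial subtorus, attributing it to \cite{FK,KW}. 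This is precisely the point that needs to be proved here and is not covered by those references: \cite{KW} establishes the relevant statement (their Proposition 10.1) for simple \emph{abelian varieties}, and \cite{FK} works with algebraic groups, whereas in Proposition \ref{Proposition A} the torus $A$ is an arbitrary simple complex torus, possibly non-algebraic, and the components $Z_i$ of the characteristic cycle are merely closed irreducible analytic subsets. There is also a smaller unjustified bridge in your argument: you assert that $\deg\Gamma_{Z_i}=0$ means ``exactly'' that the Gauss map has lower-dimensional image, but $\deg\Gamma_{Z_i}$ is by definition the degree of the induced map from the incidence variety to $G(n-1,V)$ (equivalently, the number of points whose tangent plane lies in a generic hyperplane), and its vanishing does not formally imply degeneracy of $\Gamma_{Z_i}$ without an argument -- a $k$-dimensional family of $k$-planes can fail to meet the Schubert cycle $G(k,W)$ for generic $W$ without the family being lower-dimensional.

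What the paper does instead -- and what you would need to supply to close the gap -- is a short direct argument that never mentions stabilizers: if $d_Z=0$ for some $Z\neq A$ appearing in $CC(\cP)$, then the projection $p:\Lambda_Z\to\C^g$ onto the second factor of $T^\ast A\cong A\times\C^g$ is not dominant, so some fiber $p^{-1}(\omega)$ with $\omega\neq 0$ is positive-dimensional; its image $Y\subseteq A$ is then a positive-dimensional subvariety all of whose tangent spaces at smooth points are annihilated by the invariant form $\omega$ (because the Liouville form vanishes on the conic Lagrangian $\Lambda_Z$), so the lift of $Y$ through $0$ to the universal cover lies in the hyperplane $\ker\omega\subsetneq\C^g$, and the subtorus generated by $Y$ is nonzero and proper -- contradicting simplicity. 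This self-contained construction is the geometric heart of the appendix; in your proposal it is assumed rather than proved, so the proposal as written is not a complete proof.
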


\begin{proof}
    We adapt the proof of \cite[Proposition 10.1]{KW}. For $Z\subseteq A$ closed and irreducible, let $\Lambda_Z\subseteq T^{\ast}A$ denote the closure in $T^{\ast}A$ of the conormal bundle in $A$ to the smooth locus of $Z$.   
    By \cite[Definition 3.34]{MS}, the characteristic cycle associated to the $\K$-perverse sheaf $\cP$ on the complex manifold $A$ is a finite formal sum
    \[ CC(\cP)= \sum_{Z\subseteq A} n_Z\cdot \Lambda_Z, \quad \textrm{ with } n_Z \in \Z,\]
    where $Z$ runs through all closed irreducible subsets of $A$, 
    \[ n_Z\colonequals (-1)^{\dim Z} \cdot \chi(\mathrm{NMD}(\cP, Z)),\]
    and $\mathrm{NMD}(\cP, Z)$ is the sheaf-theoretic counterpart of the normal Morse data defined in \cite[\S 3.1, (26)]{MS}. By \cite[Example 3.26]{MS}, we have $n_Z
    \geq 0$ for $\K$-perverse sheaves. The Dubson-Kashiwara's microlocal index formula still holds in this setting: apply \cite[Theorem 3.38]{MS} when $f$ is a constant function. 
    Therefore we have
    \[ \chi(\cP) = \sum_{Z\subseteq A} n_Z \cdot d_Z, \]
    where
    \[ d_Z =\langle [\Lambda_A]\cdot [\Lambda_Z] \rangle_{T^{\ast}A} \in \N.\]
    Therefore, $d_Z$ can be computed as the intersection number of $\Lambda_Z$ with $\Omega_{\gamma}$ inside $T^{\ast}A$, where $\Omega_{\gamma}$ is the graph of a generic differential one-form $\gamma$ on $A$.    Since $A$ is a torus, the cotangent bundle $T^{\ast}A=A\times \C^g$ is trivial of rank $g=\dim A$, with two projection maps
    \[ \begin{tikzcd}
    {} &T^\ast A\arrow[dl,"\mathrm{pr}_1"] \arrow[dr,"\mathrm{pr}_2"]& {}\\
    A & & \C^g
\end{tikzcd}
\]
    Projecting from $\Lambda_Z\subseteq T^{\ast}A$ onto the second factor $\C^g$ induces a map
    \[ p: \Lambda_Z \to \C^g.\]
    It is easy to see that degree of $p$ is equal to the degree of the Gauss map associated to $Z$ and $A$ as discussed above.
    By Proposition \ref{proposition: intersection number is the degree of Gauss map}, the intersection number $d_Z=\# \abs{\Lambda_Z\cap \Omega_{\gamma}}$ is the generic degree of the Gauss map, which is equal to the generic degree of $p$.
    
    Since $n_Z,d_Z$ are both nonnegative, the assumption $\chi(P)=0$ implies that $d_Z=0$. Therefore $p$ is not surjective and $\dim p(\Lambda_Z)<g$. Then for some cotangential vector $\omega\in p(\Lambda_Z)$, the fiber $p^{-1}(\omega)$ is positive-dimensional. If $Z\neq A$, we can assume $\omega \neq 0$. Let $Y\subseteq A$ be the image of $p^{-1}(\omega)\subseteq T^{\ast}A$ under the map $T^{\ast}A\to A$. Then $\dim Y>0$, and up to a translation we can assume $0\in Y$. By construction, $\omega$ is normal to $Y$ in every smooth point of $Y$, so the preimage of $Y$ under the universal covering $\C^g \to A=\C^g/\Lambda$ lies in the hyperplane of $\C^g$ orthogonal to $\omega$. Thus the subtorus of $A$ generated by $Y$ is strictly contained in $A$ but non-zero, contradicting the assumption that $A$ is simple. 
    Therefore the characteristic cycle $\mathrm{CC}(\cP)$ only contains the zero section of $T^{\ast}A$ and hence $\cP$ is a shift of a local system, see e.g.\ Lemma 5.14 in the first arXiv version of \cite{LMWBobadilla}. 
\end{proof}

\begin{remark}
The proof of Proposition \ref{Proposition A} works verbatim for arbitrary possibly non-simple perverse sheaves. We presented an alternative argument to reduce to the non-simple case in Corollary \ref{corollary: locally free criterion for non-simple perverse sheaf} above for the convenience of the reader.
\end{remark}

\bibliographystyle{abbrv}
\bibliography{Zeros_of_one_forms}{}

\vspace{2cm}

\footnotesize{
\textsc{Institute of Algebraic Geometry, Leibniz University Hannover, Welfengarten 1, 30167 Hannover, Germany.} \\
\indent \textit{E-mail address:} \href{mailto:schreieder@math.uni-hannover.de}{schreieder@math.uni-hannover.de}

\vspace{\baselineskip}

\textsc{Max-Planck-Institut f\"ur Mathematik, Vivatsgasse 7, 53111 Bonn, Germany} \\
\indent \textit{E-mail address:} \href{mailto:ruijie.yang@hu-berlin.de}{ruijie.yang@hu-berlin.de}
}
\end{document}